\definecolor{seagreen}{rgb}{0.09, 0.45, 0.27}
\definecolor{purple}{rgb}{0.63, 0.36, 0.94}
\newtheorem{theorem}{Theorem}[section]
\newtheorem{proposition}[theorem]{Proposition}
\newtheorem{corollary}[theorem]{Corollary}
\theoremstyle{definition}
\newtheorem{definition}[theorem]{Definition}
\newtheorem{example}{Example}
\newtheorem{remark}[theorem]{Remark}
\numberwithin{equation}{section}
\def\<{\langle}
\def\>{\rangle}
\newcommand{\equalD}{\overset{\mathcal D}{=\joinrel=}}
\newcommand{\E}{\ensuremath{\mathbb{E}}}
\newcommand{\zz}[1]{\mathbb{#1}}
\newcommand{\Prob}{\mathbb P}
\newcommand*{\mathdef}{{\mathrel{\rlap{%
                     \raisebox{0.3ex}{$\m@th\cdot$}}%
                     \raisebox{-0.3ex}{$\m@th\cdot$}}%
                     =\,}}
\def\1{\ifmmode {1\hskip -3pt \rm{I}}
\else {\hbox {$1\hskip -3pt \rm{I}$}}\fi} 
\newcommand{\fise}{f_{\textup{ISE}}}
\newcommand{\xset}[1]{
  \left\{ #1 \right\}}
\subjclass[2010]{60J80; 60G50}
\keywords{branching random walk; ISE; occupation density}
\title[Occupation densities of BRWs]{Occupation densities of Ensembles of Branching Random Walks}
\author[S. Lalley and S. Tang]{Steven P. Lalley and Si Tang}
\thanks{The first author is supported by National Science Foundation Award DMS-1612979. }
\address{Steven P. Lalley\\
  Department of Statistics\\
  University of Chicago\\
  5747 S. Ellis Avenue\\
  Chicago, IL 60637, USA}
\email{lalley@galton.uchicago.edu}
\address{Si Tang\\
  Department of Mathematics\\
  Lehigh University\\
  17 Memorial Drive East\\
  Bethlehem, PA 18015, USA}
\email{sit218@lehigh.edu}
\begin{document}
\date{\today}

\maketitle

\begin{abstract} We study the limiting occupation density process for
  a large number of critical and driftless branching random walks. We
  show that the rescaled occupation densities of $\lfloor sN\rfloor$
  branching random walks, viewed as a function-valued, increasing
  process $\{g_{s}^{N}\}_{s\ge 0}$, converges weakly to a pure jump
  process in the Skorohod space
  $\mathbb D([0, +\infty), \mathcal C_{0}(\zz R))$, as
  $N\to\infty$. Moreover, the jumps of the limiting process consist of
  i.i.d. copies of an Integrated super-Brownian Excursion (ISE)
  density, rescaled and weighted by the jump sizes in a real-valued
  stable-1/2 subordinator.
\end{abstract}

\section{Introduction}
\label{sec:intro}
In a branching random walk on the integers, individuals live for one generation,
reproduce as in a Galton-Watson process, giving rise to offspring
which then independently jump according to the law of a random walk.
A branching random walk is said to be \emph{critical} if the offspring
distribution $\nu$ has mean $1$, and \emph{driftless} if the jump
distribution $F$ has mean $0$ and finite variance. We will assume
throughout that (i) the offspring distribution $\nu$ has mean one (so
that the Galton-Watson process is critical) and finite, positive
variance $\sigma^{2}_{\nu} $; and (ii) the step distribution $F$ for
the random walk has span one, mean zero and finite, positive variance
$\sigma^{2}_{F}$. (Thus, the spatial locations of individuals will always
be points of the integers $\mathbb{Z}$.)

To any  branching random walk can be associated a randomly labeled
Galton-Watson tree $\mathcal{T}$, where the Galton-Watson tree describes the lineage
of the individuals and the label of each vertex marks the spatial location of
the corresponding individual. This  labeled tree
\[\mathcal T= (T, \{l(v)\}_{ v\in T})\] 
is generated as follows. 
\begin{enumerate}[(i)]
\item Let $T$ be the genealogical tree of a Galton-Watson process with
  a single ancestral individual and offspring distribution $\nu$, with
  the root node $\rho$ representing this ancestral individual. Since
  $\nu$ has mean $1$, the tree $T$ is finite with probability one.
\item Assign the label $l(\rho)=0$ to the root.
\item
  Conditional on $T$, let $\xset {\xi_{e}}_{e \in \mathcal{E}(T)}$ be
  a collection of i.i.d. random variables, with common distribution
  $F$ (the ``step distribution'') indexed by the (directed)
  edges $e=(u,v)$ of the tree $T$, where $u$ is the parent vertex of $v$. 
  For any such directed edge
  $e=(u,v)$, define
  \begin{displaymath}
    l(v)=l(u)+\xi_{e}.
  \end{displaymath}
\end{enumerate}

Given the labeled tree $\mathcal T$ associated with the branching
random walk, the occupation measures can be recovered as follows. For
any time $n \in \mathbb{Z}_{+}$ and any site $x\in \mathbb{Z}$, the
number $Z_{n}(x)$ of individuals at location $x$ at time $n$ is the
number of vertices $v \in T$ at height $n$ (i.e., at distance $n$ from
the root) with label $l(v)=x$. The \textit{vertical
  profile}, or the \textit{occupation measure}, of $\mathcal T$ (see
\cite{ISE2006-2}) is the random counting measure on $\mathbb Z$
defined by
\begin{displaymath}
  X(x; \mathcal{T})=\sum _{n=0}^{\infty}Z_{n}(x).
\end{displaymath}
In this paper, we study the limiting behavior of
the occupation measure and its connection to super-Brownian
motions. Before stating our main result, we review a few results about
the occupation measure of random labeled trees.

The study of such occupation measures dates back to Aldous
\cite{AldousISE1993}, who introduced an object called the \emph{integrated
super-Brownian excursion} (ISE), denoted by  $\mu_{\textup{ISE}}$, a (probability)
measure-valued random variable that arises as the scaling limit
of the occupation measure of certain labeled random planar trees and
tree embeddings. In \cite{marckert2004rotation}, Marckert proved that
the rescaled occupation measure of a random binary tree of $n$
vertices converges weakly to $\mu_{\text{ISE}}$ as $n\to
\infty$. 

Bousquet-M\'{e}lou and Janson \cite{ISE2006-2} later proved a local
version of Marckert's result: they showed that the \emph{density} of
the rescaled occupation measure of random binary trees, random
complete binary trees, or random plane trees on $n$ vertices converges
to the density of $\mu_{\textup{ISE}}$, denoted by $\fise$, which is known to be a 
random H\"older$(\alpha)$-continuous function for every $\alpha <1$ 
with compact support. This local convergence was later extended in \cite[Theorem 1.1]{condGW-profile}
to general branching random walks conditioned to have exactly $n$
vertices, as long as $\nu$ and $F$ satisfy the assumptions above.

In this paper, we consider an ensemble of critical, driftless
branching random walks, all with the same offspring and step
distributions $\nu$ and $F$, and study the limiting behavior of the total
occupation density. Our first result shows that the total occupation
density converges in the Skorohod space
$\mathbb D([0, +\infty), \mathcal C_{0}(\mathbb R)\,)$, which can be
characterized by a super-Brownian
motion.

Let $\mathcal T^{1}, \mathcal{T}^{2},\ldots$ be the random labeled
trees associated with an infinite sequence $(Z_{1,n})_{n\ge 0}$, $(Z_{2,n})_{n\ge 0}, \cdots $ of
independent copies of the branching random walk. For each integer
$m \geq 1$ define
\begin{equation}
  \label{eq:occupation-measure}
  X^{m}(j):= \sum_{i=1}^{m} X(j;\mathcal{T}^{i})
\end{equation}
to be the total number of vertices in the first $m$ trees with label
$j\in \mathbb Z$, and define $\bar X^{m}(x) $ to be the linear
interpolation to $x \in \mathbb{R}$. Observe that $X^{m}$ can be
viewed as the occupation measure of the branching random walk
initiated by the $m$ ancestral particles that engender the branching
random walks $Z_{1},Z_{2}, \cdots Z_{m}$. Clearly, the function
$\bar X^{m}(x) $ is an element of $ \mathcal {C}_{0}(\mathbb
R)$. Finally, define the $\mathcal C_{0}(\mathbb R)$-valued process
$\{g_{s}^{N}\}_{s\ge 0}$ by
\begin{equation}
  \label{eq:occupation-measure-rescaled}
  g_{s}^{N}(x) :=N^{-3/2} \bar X^{\lfloor sN\rfloor}(\sqrt{N} x).
\end{equation}

\begin{theorem} \label{thm1.1} As $N \to \infty$, the rescaled density
  processes $\{ g_{s}^{N}\}_{s\ge 0} $ converge weakly in the Skorohod
  space
  $\mathbb D:=\mathbb D([0, +\infty), \mathcal C_{0}(\mathbb R))$ to a
  process $\{g_{s}\}_{s\ge 0}$.  Moreover, the limiting process
  satisfies
\begin{align}
\label{eqn:defgs}
g_{s}(x) &\equalD \int_{0}^{\infty}Y^{s}\left(t,
x\right) dt
\end{align}
where $\{Y^{s}(t, x), x\in \zz R\}_{t\ge
  0}$ is the density process for a super-Brownian motion $\{Y^{s}_{t}\}_{t\ge 0}$ with variance
parameters $(\sigma_{\nu}^{2}, \sigma_{F}^{2})$, started from the initial measure
$Y^{s}_{0}=s \delta_{0}$.
\end{theorem}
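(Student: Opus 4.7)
The key structural observation is that $s \mapsto g_s^N$ is a non-decreasing, pure-jump càdlàg process in $C_0(\mathbb R)$ with independent, identically distributed increments: at each time $i/N$ one adds the rescaled density $\Delta_i^N := N^{-3/2}\bar X(\sqrt N\cdot;\mathcal T^i)$ of the $i$-th tree. In particular $g^N$ is, on the time grid $i/N$, a compound Poisson Lévy process in $C_0(\mathbb R)$, so any distributional limit must be a non-decreasing Lévy process in $\mathbb D([0,\infty),C_0(\mathbb R))$. My plan is to (i) establish one-dimensional marginal convergence via branching-random-walk-to-super-Brownian-motion (BRW-to-SBM) scaling, (ii) upgrade to functional convergence using the independent-increment structure together with a tightness bound, and (iii) identify the limit with $\int_0^\infty Y^s(t,\cdot)\,dt$ and with the pure-jump representation sketched in the abstract.

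For the marginal at fixed $s$, rewrite
\[
  g_s^N(x) \;=\; N^{-1/2}\sum_{n\ge 0}\Paren{N^{-1}Z^{\lfloor sN\rfloor}_n(\sqrt N x)} \;\approx\; N^{-1/2}\int_0^\infty Z^{\lfloor sN\rfloor}_{\lfloor tN\rfloor}(\sqrt N x)\,dt,
\]
where $Z^m_n(\cdot) := \sum_{i=1}^m Z_{i,n}(\cdot)$ is the total count of the first $m$ walks at generation $n$. The classical convergence of critical, driftless BRWs to SBM with $\lfloor sN\rfloor$ ancestors at the origin, together with the existence of a.s.\ continuous SBM densities in one spatial dimension (Konno--Shiga, Reimers, Perkins), implies that $N^{-1/2}Z^{\lfloor sN\rfloor}_{\lfloor tN\rfloor}(\sqrt N x)$ converges in distribution to $Y^s(t,x)$ in a suitable function space on $(0,\infty)\times\mathbb R$. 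Because critical SBM has an a.s.\ finite extinction time, the time integral can be passed to the limit via a stopped/truncated dominated convergence argument, yielding $g_s^N \Rightarrow \int_0^\infty Y^s(t,\cdot)\,dt$ in $C_0(\mathbb R)$. Joint convergence over finitely many times $s_1<\dots<s_k$ follows from the independent increments of $g^N$ together with the branching property $Y^{s_1+s_2}\equalD Y^{s_1}+\tilde Y^{s_2}$ of SBM.

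For tightness in $\mathbb D([0,\infty),C_0(\mathbb R))$, monotonicity of $s\mapsto g_s^N$ in the pointwise order simplifies matters: the Aldous modulus reduces to checking that
\[
  \lim_{\delta\downarrow 0}\limsup_{N\to\infty}\Pro\Paren{\sup_{0\le s\le T}\|g_{s+\delta}^N-g_s^N\|_\infty>\varepsilon}=0.
\]
Since $g^N$ has stationary independent increments, $g_{s+\delta}^N-g_s^N\equalD g_\delta^N$, whose sup-norm converges in distribution to $\|\int_0^\infty Y^\delta(t,\cdot)\,dt\|_\infty$ by step (i); this in turn tends to $0$ as $\delta\downarrow 0$ because SBM scaling gives total integrated mass $\int_0^\infty Y^\delta_t(\mathbb R)\,dt$ of order $\delta$. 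Combined with the marginal tightness this yields tightness in $\mathbb D$; the limit is then a non-decreasing Lévy process with prescribed marginal $\int_0^\infty Y^s(t,\cdot)\,dt$, which characterizes it uniquely in law.

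The main technical obstacle is the heavy-tailed jump structure: since $\Pro(|T|>n)\sim c\,n^{-1/2}$, the largest of $\lfloor sN\rfloor$ tree sizes is of order $N^2$, so individual jumps $\Delta_i^N$ are genuinely of order $1$ in $\|\cdot\|_\infty$ and the limit is discontinuous in $s$. Consequently one cannot rely on any path-regularity of the summands and must track the full jump distribution. The natural route is a Poisson point process limit for the array $\{(i/N,\Delta_i^N)\}$: the stable-$1/2$ scaling $N\cdot\mathcal L(|T|/N^2)\to\tfrac{c}{2}\Delta^{-3/2}\,d\Delta$ together with the conditional local limit of \cite{condGW-profile} (which, given $|T|=m$, makes the rescaled density converge to $\fise$, translating to jump shape $\Delta^{3/4}\fise(\Delta^{-1/4}\cdot)$ when $m/N^2\approx\Delta$) produces the pure-jump description and, via SBM excursion theory, matches $\int_0^\infty Y^s(t,\cdot)\,dt$. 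The delicate work is uniform control, in $\Delta$ bounded away from $0$ and $\infty$, of the speed of the local limit and of the tightness of the rescaled ISE profiles in $C_0(\mathbb R)$; a truncation separating the finitely many ``big'' trees (responsible for the jumps) from the many ``small'' trees (which form a negligible bulk) makes this rigorous.
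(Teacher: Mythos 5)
Your overall architecture --- marginal convergence at fixed $s$, Aldous-type tightness exploiting stationary independent increments, identification of the limit as the integrated super-Brownian density with a time truncation removed via the finite extinction time --- matches the paper's. But two steps have genuine problems.

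First, the tightness criterion you write down is wrong. The condition $\lim_{\delta\downarrow 0}\limsup_{N}\mathbb{P}\bigl(\sup_{0\le s\le T}\|g^N_{s+\delta}-g^N_s\|_\infty>\varepsilon\bigr)=0$ is the modulus for $C$-tightness, and it fails here for exactly the reason you emphasize in your last paragraph: the largest of the first $\lfloor TN\rfloor$ trees has size of order $N^2$, so some single increment $\Delta^N_i$ has sup-norm bounded away from $0$ in probability, and the supremum over $s$ therefore does not vanish as $\delta\downarrow 0$. (If it did, every subsequential limit would be continuous in $s$, contradicting the pure-jump structure you later describe.) The correct Aldous criterion controls $\|g^N_{\tau_N+\delta_N}-g^N_{\tau_N}\|_\infty$ for stopping times $\tau_N$ and a single vanishing sequence $\delta_N$; by stationary independent increments this reduces to showing that $g^N_{\delta}$ is uniformly small for small $\delta$, which is the computation you actually perform. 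So the verification is salvageable, but the criterion you state must be replaced by the stopping-time version.

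Second, and more seriously, you never establish compact containment of the marginals in $\mathcal C_{0}(\mathbb R)$, i.e., equicontinuity of the rescaled densities. This is the technical heart of the matter: the paper proves a Kolmogorov--\v{C}entsov bound $\mathbb E\bigl[(g^N_s(x)-g^N_s(y))^{2m}\mathbf 1_{G_{sN}}\bigr]\le C|x-y|^{2m/5}$ on the event that extinction occurs by generation $NH$, using $2m$-th moment increment bounds for $Z_n(x)-Z_n(y)$ (Proposition 5 of \cite{lalley1d}) summed over generations and factored over the independent ancestors. Without such an estimate, your step (i) is also circular: the density-level convergence $N^{-1/2}Z^{\lfloor sN\rfloor}_{\lfloor tN\rfloor}(\sqrt N x)\Rightarrow Y^s(t,x)$ that you invoke does not follow from Konno--Shiga plus Watanabe, because the known local limit theorems require the rescaled initial densities to converge in $\mathcal C_{0}(\mathbb R)$, which fails for the initial measure $s\delta_0$ --- this is precisely the obstruction recorded in Remark~\ref{rem:indef-integral}. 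The paper's route is: (a) prove tightness of the densities in $\mathcal C_{0}(\mathbb R)$ by the moment bound above; (b) obtain convergence of the time-truncated occupation \emph{measures} from Watanabe's theorem; (c) conclude that any subsequential limit of the densities must be the density of the limiting occupation measure; (d) remove the truncation with Kolmogorov's extinction-tail estimate. Your outline supplies (b) and (d), leaves (c) implicit, and omits (a) entirely.
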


\begin{remark} Super-Brownian motion $\{Y_{t}\}_{t\ge 0}$ is, by
  definition (see for instance \cite{etheridge2000introduction}, ch.~1) a
  measure-valued stochastic process that can be constructed as a weak
  limit of rescaled counting measures associated with branching random
  walks.  In one dimension, for each $t>0$, the random measure $Y_{t}$
  is absolutely continuous relative to the Lebesgue measure, and the
  Radon-Nikodym derivative $Y(t, x)$ is jointly continuous in $(t,x)$
  \cite{KonnoShiga1988}. Super-Brownian motion is singular in 
  higher dimensions and thus the representation \eqref{eqn:defgs} does 
  not exist in higher dimensions. 
  When the dependence on the variance
  parameters $\sigma_{\nu}^{2}$ and $\sigma_{F}^{2}$ must be
  emphasized, we do so by adding them as extra superscripts, i.e.,
  \begin{displaymath}
    Y^{s,\sigma_{\nu}^{2},\sigma_{F}^{2}}(t,x).
  \end{displaymath} When
  $\sigma_{\nu}^{2}=\sigma^{2}_{F}=1$, the measure-valued process
  associated with $Y^{s,1,1}(t,x)$ is a
  \emph{standard} super-Brownian motion. The density processes for
  different variance parameters obey a simple scaling
  relation:
\[
 Y^{s, \sigma_{\nu}^{2}, \sigma_{F}^{2}}\left(t, x\right) \equalD
 \sigma_{\nu}\sigma_{F}^{-1}Y^{s,1,1}\left(\sigma_{\nu}^{2}t,
   \sigma_{\nu}\sigma_{F}^{-1}x\right),\ \  \quad \textrm{for all} \, t>0,
 \, x \in \mathbb{R}.
\]
Thus,  we can rewrite \eqref{eqn:defgs} in terms of the density
function of standard super-Brownian motion as follows:
\[
g_{s}(x) \equalD \sigma_{\nu}\sigma_{F}^{-1}
\int_{0}^{\infty}Y^{s,1,1}\left(\sigma_{\nu}^{2}t,
  \sigma_{\nu}\sigma_{F}^{-1}x\right)dt 
\]
\end{remark}

\begin{remark}\label{rem:indef-integral}
  For any fixed $s>0$ and each integer $N\geq 1$, the random function
  $g^{N}_{s} (\cdot)$ is the (rescaled) occupation density of the
  branching random walk gotten by amalgamating the branching random
  walks generated by the first $\lfloor sN\rfloor$ initial
  particles. Because this sequence of branching random walks is
  governed by the fundamental convergence theorem of Watanabe
  \cite{Watanabe1968} and its extension to densities by Lalley
  \cite{lalley1d}, the limiting random function $g_{s} (\cdot)$ must
  (after the appropriate scaling) be the integrated occupation density of the
  super-Brownian motion with initial measure $s\delta_{0}$. This
  explains relation \eqref{eqn:defgs}. But even for fixed $s>0$ the
  weak convergence
  $g^{N}_{s}(x) \Longrightarrow\int_{0}^{\infty}Y^{s,
    \sigma_{\nu}^{2}, \sigma_{F}^{2}}\left(t, x\right) dt$ does
  not follow directly from the local convergence of the density
  process proved in \cite[Theorem 2]{lalley1d}, for two
  reasons. First, the local convergence result in \cite{lalley1d}
  requires that the initial densities must, after Feller-Watanabe
  rescaling, converge to a density function
  $Y^{s}(0, \cdot)\in \mathcal C_{0}(\mathbb R)$. In Theorem
  \ref{thm1.1}, however, the limiting initial density
  $Y^{s}_{0}=s \delta_{0}$ is not absolutely continuous with respect
  to the Lebesgue measure. Second, even if the local convergence could
  be shown to remain valid under the initial condition
  $Y^{s}_{0}=s\delta_{0}$, the indefinite integral operator on
  $\mathbb C([0, +\infty), \mathcal C_{0}(\mathbb R))$ is not bounded,
  and so it would not follow, at least without further argument, that
  the integral of the discrete densities would converge to that of the
  super-Brownian motion density over the time interval $[0, +\infty)$.
\end{remark}

\begin{figure}[htbp]
\centering
\includegraphics[width=0.8\textwidth]{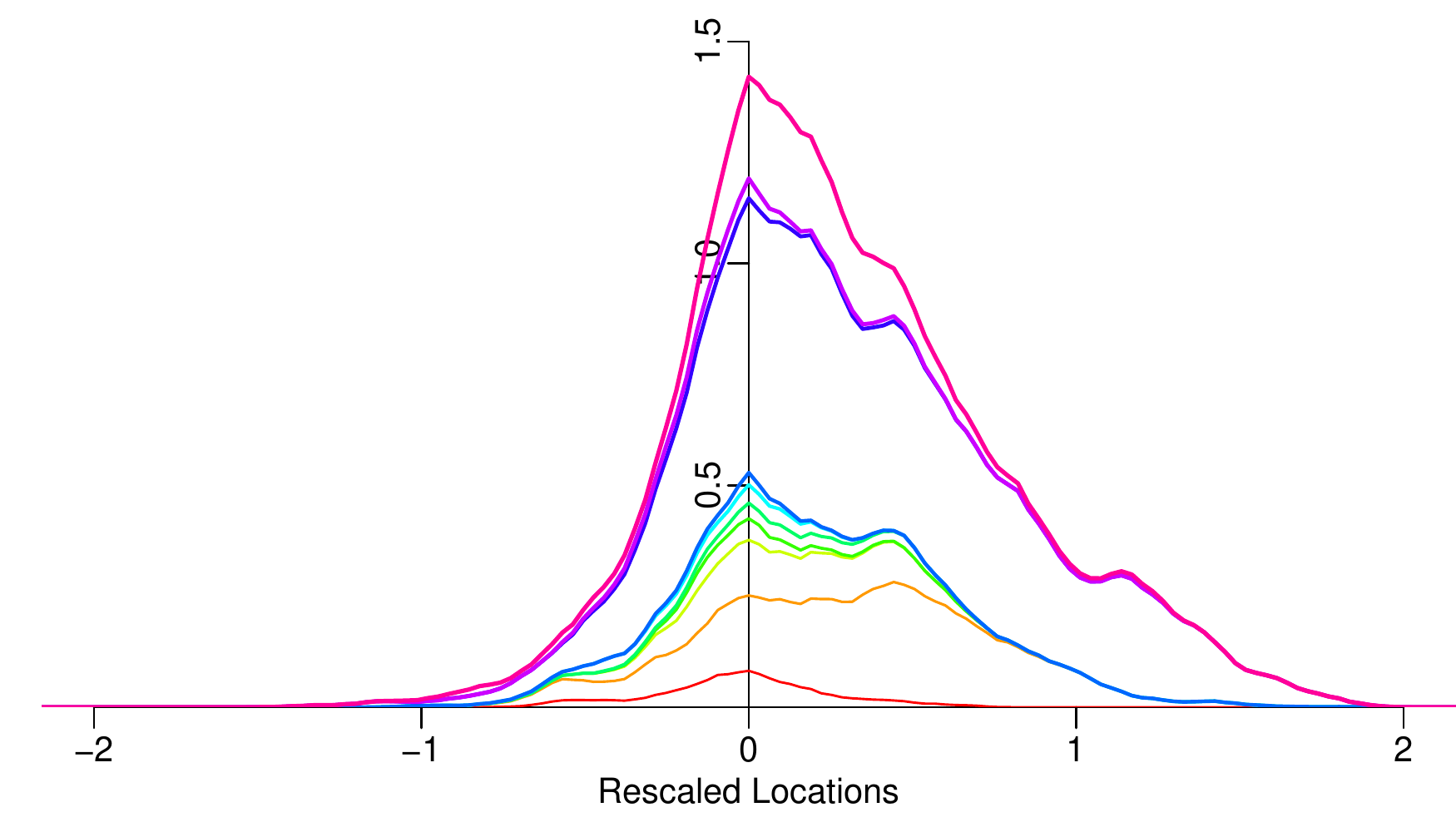}
\caption{A simulation of $g_{s}^{N}$, for $N=1000$ and $s=0.1, 0.2, \ldots, 1$. The offspring and step distributions are $\nu = \text{Poi}(1)$ and $F = (\delta_{-1}+\delta_{0}+\delta_{1})/3$.}
\label{Fig:simuG}
\end{figure}

For each $N\geq 1$, the process $\{g_{s}^{N}\}_{s\ge 0}$ is
nondecreasing\footnote{By ``nondecreasing'', we mean that for all
  $t\ge s\ge 0$, $g^{N}_{t}-g^{N}_{s} \ge 0$. A simulation is shown in
  Figure \ref{Fig:simuG}. } in $s$ (relative to the natural partial
ordering on $\mathcal C_{0} (\zz{R})$) and has stationary, independent
increments.  Therefore, the limiting process $\{g_{s}\}_{s\ge0}$ must
also be nondecreasing, with stationary, independent increments. 
We prove the following properties of the limiting process.

\begin{theorem}\label{thm:property-gs}
The limiting function-valued process $\{g_{s}\}_{s\ge 0}$ has the following properties.
\begin{enumerate}[\textup{(}i\textup{)}]
\item It obeys the scaling relation $g_{s}(x)
  \overset{\mathcal D}{=\joinrel=} s^{3/2}g_{1}(x/\sqrt{s})$. 
\item  The real-valued process $\{I_{s}\}_{s\ge 0}$, where  $I_{s} := g_{s}(0)$ is
  the occupation density at zero, is a stable subordinator with exponent $\alpha = 2/3$.
\item The real-valued process $\{\theta_{s}\}_{s\ge 0}$, where
  $\theta_{s} := \int_{-\infty}^{\infty}g_{s}(x)dx$ is the
  total rescaled occupation density, is a stable subordinator with
  exponent $\alpha = 1/2$.
\item $\{g_{s}\}_{s\ge 0}$ is a pure-jump subordinator in the Banach lattice $\mathcal C_{0}(\mathbb
R)$ (see Definition \ref{def:banachlattice}). 
\end{enumerate} 
\end{theorem}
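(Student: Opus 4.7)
The plan is to derive (i) from the prelimit, exploit stationary independence of increments (inherited from the discrete level) to reduce (ii) and (iii) to simple scaling arguments for subordinators, and finally deduce (iv) from (iii) by a Lévy--Itô decomposition of $\{g_s\}$ in the separable Banach lattice $\mathcal{C}_0(\mathbb{R})$.

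For (i), I would start from the exact prelimit identity
$$g_s^N(x) \;=\; (s')^{3/2}\, g_1^{\lfloor sN\rfloor}\!\bigl(x/\sqrt{s'}\bigr), \qquad s' := \lfloor sN\rfloor/N,$$
which is immediate from \eqref{eq:occupation-measure-rescaled} because both sides are built from the same $\lfloor sN\rfloor$ trees. By Theorem \ref{thm1.1}, both $g_s^N$ and $g_1^{\lfloor sN\rfloor}$ converge weakly in $\mathcal{C}_0(\mathbb{R})$, and $s' \to s$; passing to the limit yields the distributional scaling in (i).

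For (ii) and (iii), observe first that the prelimit process $\{g_s^N\}$ has stationary, independent increments in $s$ (an increment corresponds to an independent block of fresh branching random walks), and this property passes to $\{g_s\}$ by the weak convergence. Hence $I_s = g_s(0)$ and $\theta_s = \int_{\mathbb{R}} g_s(x)\,dx$ are nondecreasing Lévy processes, i.e., subordinators. The scaling (i) immediately gives
$$I_s \equalD s^{3/2} I_1, \qquad \theta_s \equalD s^{2}\,\theta_1$$
(the $\theta$ scaling is a one-line change of variables $y=x/\sqrt{s}$). A self-similar subordinator with Hurst exponent $H$ is an $\alpha$-stable subordinator with $\alpha = 1/H$, which yields $\alpha = 2/3$ for $I$ and $\alpha = 1/2$ for $\theta$. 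Non-triviality of $I_1$ (respectively $\theta_1$) follows from the fact that the super-Brownian density $Y^{1}(t,0)$ is strictly positive on a time set of positive measure, and correspondingly for its spatial integral.

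For (iv), since $\{g_s\}$ is a càdlàg Lévy process with nondecreasing paths in the separable Banach lattice $\mathcal{C}_0(\mathbb{R})$, it admits a Lévy--Itô decomposition
$$g_s \;=\; s \cdot b \;+\; \sum_{r \le s}\Delta g_r,$$
where $b \in \mathcal{C}_0(\mathbb{R})_+$ is a deterministic drift and the sum runs over the jumps of the process. Integrating over $x$,
$$\theta_s \;=\; s \int_{\mathbb{R}} b(x)\,dx \;+\; \sum_{r \le s} \int_{\mathbb{R}} \Delta g_r(x)\,dx.$$
By (iii), $\theta$ is a stable($1/2$) subordinator, which is pure jump and has zero linear drift; thus $\int_{\mathbb{R}} b(x)\,dx = 0$, and since $b$ is continuous and nonnegative this forces $b \equiv 0$. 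Hence $\{g_s\}$ is a pure-jump subordinator.

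The main obstacle is the rigorous set-up of (iv): one must confirm that the weak-limit $\{g_s\}$ from Theorem \ref{thm1.1} is a genuine $\mathcal{C}_0(\mathbb{R})$-valued Lévy process (stationarity and independence of increments plus càdlàg sample paths), and invoke the Lévy--Itô decomposition in this infinite-dimensional lattice setting. Once this framework is in place, reducing the drift question to the real-valued subordinator $\theta_s$ via the continuous linear functional $\phi \mapsto \int \phi$ is what closes the argument.
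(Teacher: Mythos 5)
Parts (i) and (ii) of your proposal coincide with the paper's argument: the same prelimit identity $g^{N}_{s}(x)=(\lfloor sN\rfloor/N)^{3/2}\,g_{1}^{\lfloor sN\rfloor}\bigl(x\sqrt{N/\lfloor sN\rfloor}\bigr)$ passed to the limit via Theorem~\ref{thm1.1}, and then self-similarity together with the L\'evy property at $x=0$. For (iii) you take a genuinely different and more economical route: you read off $\theta_{s}\equalD s^{2}\theta_{1}$ directly from (i) by a change of variables, whereas the paper identifies $\{\theta_{s}\}_{s\ge 0}$ with $\{\sigma_{\nu}^{-2}\tilde\tau_{s}\}_{s\ge 0}$, the (rescaled) inverse local time of Brownian motion, via Le~Gall's theorem applied to the total tree sizes $A_{\lfloor sN\rfloor}/N^{2}$. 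Your argument gets the exponent $\alpha=1/2$ more cheaply, but two caveats: the functional $f\mapsto\int_{\mathbb R}f$ is not bounded on $\mathcal C_{0}(\mathbb R)$, so the finiteness of $\theta_{1}$ and the c\`adl\`ag property of $s\mapsto\theta_{s}$ (not just nontriviality) need separate justification; and you lose the exact identification of the law of $\theta$, which the paper reuses in the proof of Theorem~\ref{thm:levy-measure} to pin down the L\'evy measure $dl/\sqrt{2\pi l^{3}}$.

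The genuine gap is in (iv). You invoke a L\'evy--It\^o decomposition $g_{s}=s\,b+\sum_{r\le s}\Delta g_{r}$ with $b$ a deterministic element of the positive cone of $\mathcal C_{0}(\mathbb R)$. In $\mathbb R$ the statement ``a subordinator is a drift plus an uncompensated sum of jumps'' is classical, but in the Banach lattice $(\mathcal C_{0}(\mathbb R),\lVert\cdot\rVert_{\infty},\le)$ it is not an off-the-shelf theorem, and it is essentially equivalent to the assertion being proved. The general infinite-dimensional L\'evy--It\^o decomposition produces a \emph{compensated} integral over small jumps; to drop the compensation one must show that $\sum_{r\le s}\Delta g_{r}$ converges in the norm of $\mathcal C_{0}(\mathbb R)$ and that the continuous residual is a linear drift, and positivity of the jumps alone does not give this in $\mathcal C_{0}(\mathbb R)$, where increasing order-bounded sequences need not converge in norm (nor have a continuous pointwise limit). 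The paper avoids the decomposition entirely: it sets $\tilde g_{s}=\sum_{t\in\mathcal J\cap[0,s]}(g_{t}-g_{t-})$, where $\mathcal J$ is the set of jump times of the \emph{scalar} process $\theta$, observes $\tilde g_{s}\le g_{s}$ by monotonicity, and uses that $\theta$ is pure jump to get $\int_{\mathbb R}(g_{s}-\tilde g_{s})\,dx=0$, whence $g_{s}=\tilde g_{s}$. In other words, the area functional from (iii) is used to squeeze out the drift and any non-jump residual simultaneously. If you keep your outline, you must supply a proof of the lattice-valued decomposition you cite, and the natural proof of it is precisely this squeezing argument.
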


As we will show, the limiting process $g_{s}$ in Theorem
\ref{thm1.1} has jump discontinuities, that is, there are times
$t>0$ such that the function $g_{t}-g_{t-}$ is non-zero (and hence
positive) over some interval. The jumps that occur before time $t=1$ can
be ordered by total area $\int (g_{t}-g_{t-})
(x)\,dx$, i.e., the jump size in the stable-1/2 process $\{\theta_{s}\}_{s\ge 0}$. Denote these jumps (viewed as elements of $\mathcal C_{0} (\zz{R})$)
by
\[
	J_{1} (x), J_{2} (x),\dots \quad \text{where} \quad \int
	J_{1}>\int J_{2}>\int J_{3}>\dots . 
\]
(In Section \ref{sec:Levyjump}, we will see that no two  jump sizes can be the same.)
For each $N$, the Galton-Watson trees $\mathcal{T}_{i}$ with
$i\leq N$ can also be ordered by their size (i.e., the number of vertices). The corresponding jumps in the (rescaled) occupation
density $g^{N}_{s}$ will be denoted by
\[
	J^{N}_{1} (x), J^{N}_{2} (x), J^{N}_{3} (x),\dots .
\]
(Thus, if the $j$th largest tree among the first $N$
trees is $\mathcal T_{\lfloor s_{j}N\rfloor} $, then $J^{N}_{j}=
g^{N}_{s_{j}}-g^{N}_{\frac{\lfloor s_{j}N\rfloor-1}{N}}$.)
\begin{corollary}
\label{cor:order-stats}
For each $m\geq 1$,
\begin{equation}\label{eq:jointConvergence}
	(J^{N}_{1},J^{N}_{2},\dots ,J^{N}_{m})\Longrightarrow 
	(J_{1},J_{2},\dots ,J_{m}),
\end{equation}
where the weak convergence is relative to the $m$-fold product
topology on $\mathcal C_{0} (\zz{R})$.
\end{corollary}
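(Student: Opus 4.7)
The plan is to derive the corollary from Theorem \ref{thm1.1} and Theorem \ref{thm:property-gs} via a continuous-mapping argument for the ``ordered-jumps'' functional on the Skorohod space $\mathbb D([0,1],\mathcal C_{0}(\zz R))$.

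First I would check that the ordering convention in the corollary is consistent. Each tree $\mathcal T^{i}$ contributes, at rescaled time $s=i/N$, a jump of $g^{N}$ equal to $h_{i}(x):=N^{-3/2}\bar X(\sqrt{N}x;\mathcal T^{i})$, whose total area is $\int h_{i}=N^{-2}|\mathcal T^{i}|$. Consequently, ordering the first $N$ trees by vertex count coincides with ordering the jumps of $g^{N}|_{[0,1]}$ by area, which matches the ordering used to define $J_{j}$ in the limit.

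Next, using Theorem \ref{thm:property-gs}(iii) jointly with Theorem \ref{thm1.1}, I would establish
\[
(g^{N},\theta^{N})\Longrightarrow (g,\theta),\qquad \theta^{N}_{s}:=\int g^{N}_{s}(x)\,dx,
\]
in $\mathbb D([0,1],\mathcal C_{0}(\zz R)\times \zz R_{+})$. Although the functional $f\mapsto \int f$ is not globally continuous on $\mathcal C_{0}(\zz R)$, continuity is restored after truncation once one controls the spatial support of $g^{N}$, which stays in an interval of length $O_{\Pro}(1)$ by the Feller--Watanabe scaling already used in Theorem \ref{thm1.1}. Since $\theta$ is a stable-$1/2$ subordinator, its jumps on $(0,1]$ are almost surely at pairwise distinct times and of pairwise distinct sizes; enumerate them in decreasing size as $\tau_{1},\tau_{2},\ldots$

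Given a fixed $m\ge 1$, I would then localize to the finitely many jumps of $\theta$ of area at least $\epsilon$, for any $\epsilon$ smaller than the $m$-th largest jump size. Standard Skorohod-space theory (cf.\ Jacod--Shiryaev, Ch.\ VI) provides, for each such $\tau_{j}$ with $j\le m$, times $\tau^{N}_{j}\in\{1/N,\ldots,N/N\}$ with $\tau^{N}_{j}\to \tau_{j}$ and $\theta^{N}_{\tau^{N}_{j}}-\theta^{N}_{\tau^{N}_{j}-}\to \theta_{\tau_{j}}-\theta_{\tau_{j}-}$. Because Skorohod convergence in $\mathbb D([0,1],E)$ for any Banach space $E$ preserves jumps at a.s.\ continuity points of the limit's time-change, the same times yield simultaneously
\[
J^{N}_{j}=g^{N}_{\tau^{N}_{j}}-g^{N}_{\tau^{N}_{j}-}\longrightarrow g_{\tau_{j}}-g_{\tau_{j}-}=J_{j}\qquad\text{in }\mathcal C_{0}(\zz R)
\]
jointly for $j=1,\ldots,m$. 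Letting $\epsilon\downarrow 0$ then establishes \eqref{eq:jointConvergence}.

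The main obstacle will be making rigorous the last step, namely that the ``largest $m$ jumps ordered by area'' functional $\pi\mapsto (\mathrm{Jmp}_{1}\pi,\ldots,\mathrm{Jmp}_{m}\pi)$ is continuous on $\mathbb D([0,1],\mathcal C_{0}(\zz R))$ at paths whose area-jumps are pairwise distinct. For real-valued Lévy processes this is classical; the reduction to the Banach-valued case hinges on using the continuous real projection $\int(\cdot)\,dx$ to identify the jump times, together with the uniform support bound on $g^{N}$, and this is the most delicate piece of the argument.
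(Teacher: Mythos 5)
Your proposal is correct and follows essentially the same route as the paper: the paper's own proof is a one-line appeal to the fact that weak convergence in the Skorohod topology on $\mathbb D$ implies weak convergence of the ordered jump discontinuities, which is exactly the continuous-mapping argument you carry out. You simply supply the details the paper leaves implicit (a.s.\ distinctness of the limit's jump areas via the stable-$1/2$ subordinator, localization to jumps of area at least $\epsilon$, and the care needed because $f\mapsto\int f$ is not sup-norm continuous on $\mathcal C_{0}(\zz R)$), and these are all legitimate and correctly resolved.
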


\begin{proof}
This is an immediate consequence of Theorem \ref{thm1.1},
because weak convergence in the Skorohod topology on $\zz{D}$ implies
weak convergence of the ordered jump discontinuities.
\end{proof}

Theorem \ref{thm1.1} can be regarded as an unconditional 
version of the local convergence in \cite[Theorem 3.1]{ISE2006-2} and \cite[Theorem 1.1]{condGW-profile}. The connection between Theorem \ref{thm1.1} and the results of
Bousquet-M\'elou/Janson and Aldous leads to a reasonably complete
description of the L\'{e}vy-Khintchine representation of the
pure-jump process $\{g_{s}\}_{s\ge 0}$. 

\begin{theorem} 
\label{thm:levy-measure} The point process of jumps of $\{g_{s}\}_{s\ge 0}$ is a Poisson point process
$\{\mathcal N(B)\}_{B \in \mathcal B}$  on the space $(\mathbb R_{+}\times
\mathbb R_{+}\times \mathcal C_{0}(\mathbb R), \mathcal B:= \mathcal B_{\mathbb
R_{+}\times \mathbb R_{+}\times \mathcal C_{0}(\mathbb R)})$ with
intensity (L\'{e}vy-Khintchine) measure
\begin{equation}
\label{eqn:gs-intensity}
\chi(dt, dl, dh) := dt\cdot \frac{dl}{\sqrt{2\pi l^{3}}}\cdot f_{\textup{ISE}}(dh). 
\end{equation}
Consequently, the process $g_{s}$ can be written as
\begin{equation}
\label{eqn:pointprocess}
g_{s}(\cdot) \equalD \frac{1}{\sigma_{\nu}\sigma_{F}}\iiint \mathbf 1_{[0,
s]}(t) l^{3/4}h\left(l^{-1/4}\,\frac{ \sigma_{\nu}}{\sigma_{F}} \cdot\,
\right) \mathcal N(dt, dl, dh). 
\end{equation}
\end{theorem}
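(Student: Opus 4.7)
The plan is to identify the L\'evy measure $\mu$ of the pure-jump subordinator $\{g_{s}\}$ by reading it off as the weak limit of the jump point processes of the pre-limit $g^{N}$, then disintegrating $\mu$ over the total-area marginal. By Theorem~\ref{thm:property-gs}(iv), the set of jumps of $\{g_{s}\}$ is a Poisson point process on $\mathbb{R}_{+}\times(\mathcal{C}_{0}(\mathbb{R})\setminus\{0\})$ with intensity of the form $dt\otimes\mu(dh)$, so identifying $\chi$ in \eqref{eqn:gs-intensity} and the representation \eqref{eqn:pointprocess} reduces to identifying $\mu$.

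For each $N$ the jumps of $g^{N}$ are the i.i.d.\ functions $J^{N}_{i}(x) := N^{-3/2} X(\sqrt{N}\,x;\mathcal{T}^{i})$ occurring at the grid times $i/N$. Combining Theorem~\ref{thm1.1} with the joint convergence of ordered jumps in Corollary~\ref{cor:order-stats} shows that the empirical jump measures $\sum_{i}\delta_{(i/N,\,J^{N}_{i})}$, restricted to any neighborhood of the zero function bounded away from zero, converge to the Poisson process with intensity $dt\otimes\mu$. Hence $\mu$ is the vague limit on such sets of $N\cdot\mathbb{P}(J^{N}_{1}\in\cdot)$.

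To compute this limit we disintegrate over the tree size $n=|\mathcal{T}^{1}|$, using two classical ingredients. First, the critical Galton--Watson tail asymptotic $\mathbb{P}(|\mathcal{T}^{1}|=n)\sim (2\pi\sigma_{\nu}^{2})^{-1/2}n^{-3/2}$ shows that, after the scaling $n=lN^{2}$, the physical-area marginal $N\cdot\mathbb{P}(|\mathcal{T}^{1}|/N^{2}\in\cdot)$ converges vaguely to a measure proportional to $dl/l^{3/2}$, the L\'evy measure of a stable-$1/2$ subordinator. Second, the local convergence of \cite[Theorem~1.1]{condGW-profile} (extending Bousquet-M\'elou and Janson to general $(\nu,F)$) asserts that, conditional on $|\mathcal{T}^{1}|=n$, the rescaled occupation density $n^{-3/4}X(n^{1/4}y;\mathcal{T}^{1})$ converges in $\mathcal{C}_{0}(\mathbb{R})$ to $f_{\textup{ISE}}(y)$, up to the $(\sigma_{\nu},\sigma_{F})$-dependent prefactors fixed by the scaling relation in the remark following Theorem~\ref{thm1.1}. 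Inserting $n=lN^{2}$ and transforming back to the fixed spatial coordinate $x$ of $J^{N}_{1}$ yields, conditionally on $|\mathcal{T}^{1}|/N^{2}\approx l$,
\begin{equation*}
  J^{N}_{1}(\cdot)\;\Longrightarrow\;\frac{1}{\sigma_{\nu}\sigma_{F}}\,l^{3/4}\,h\!\left(l^{-1/4}\tfrac{\sigma_{\nu}}{\sigma_{F}}\,\cdot\right),\qquad h\sim f_{\textup{ISE}}.
\end{equation*}
Assembling the two marginals, and doing the routine algebra that matches the $\sigma_{\nu}$ prefactor from the tail asymptotic against the change of parametrization $l\mapsto\int J^{N}_{1}=l/\sigma_{\nu}^{2}$ between the physical jump area and the ISE scaling parameter, produces exactly $\chi(dt,dl,dh)=dt\cdot(2\pi l^{3})^{-1/2}dl\cdot f_{\textup{ISE}}(dh)$ and hence \eqref{eqn:pointprocess}.

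The principal technical obstacle is the joint (size, shape) convergence. The local ISE limit in \cite{condGW-profile} is stated pointwise in $n$, whereas here $n$ is itself random of order $N^{2}$; a version uniform over compact ranges of $l=n/N^{2}$, or an asymptotic-independence argument between size and normalized shape, is needed to legitimately factorize $\mu$. A clean workaround is to fix $\epsilon>0$ and truncate to jumps of physical area at least $\epsilon$: in the limit only finitely many such jumps occur, their count being Poisson with finite mean by Theorem~\ref{thm:property-gs}(iii); one then applies Corollary~\ref{cor:order-stats} to identify the joint law of these finitely many ordered jumps directly, and sends $\epsilon\downarrow 0$ using the $\sigma$-finiteness of $\chi$.
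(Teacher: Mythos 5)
Your proposal is correct and rests on the same two pillars as the paper's proof: Corollary~\ref{cor:order-stats} for convergence of the ordered jumps, and the conditioned local limit theorem of \cite{condGW-profile} for the shape of each large jump given its size. The one genuine difference is how the size marginal $\frac{dl}{\sqrt{2\pi l^{3}}}$ is obtained. You propose to read off the L\'evy measure directly as the vague limit of $N\,\Prob(J^{N}_{1}\in\cdot)$, disintegrated over $|\mathcal T^{1}|$ via the Otter--Dwass asymptotic $\Prob(|\mathcal T^{1}|=n)\sim(2\pi\sigma_{\nu}^{2})^{-1/2}n^{-3/2}$; the paper instead imports the size marginal from Theorem~\ref{thm:property-gs}(iii), where $\{\theta_{s}\}$ was already identified (via Le~Gall's theorem) with $\sigma_{\nu}^{-2}$ times the inverse local time of Brownian motion, so that the jump sizes are literally the ordered excursion lengths with intensity $dy/\sqrt{2\pi y^{3}}$. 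Your route is more self-contained but needs a local limit theorem for $|\mathcal T|$ (hence an aperiodicity caveat the paper's hypotheses do not grant), whereas the tail version $\Prob(|\mathcal T|>n)$, or the paper's excursion identification, avoids this. You also correctly isolate the only place the algebra can go wrong --- the reparametrization $l=\sigma_{\nu}^{2}\int J$ between physical jump area and the ISE scaling variable, which is exactly what absorbs the $\sigma_{\nu}$ from the tail asymptotic and leaves the clean intensity $dl/\sqrt{2\pi l^{3}}$. Finally, the joint size--shape convergence issue you flag is real and is in fact glossed over in the paper as well (it applies \cite{condGW-profile} to the $i$-th largest tree, whose size is random of order $N^{2}$); your $\epsilon$-truncation workaround, identifying the finitely many jumps of area $\geq\epsilon$ via Corollary~\ref{cor:order-stats} and letting $\epsilon\downarrow 0$, is essentially the argument the paper implicitly relies on, so no gap remains.
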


The remainder of the paper is organized as follows. Section \ref{sec:proof1.1} is devoted
to the proof of Theorem \ref{thm1.1}, where we make use of Aldous'
stopping time criterion \cite{Aldous-tightness} to show the tightness
of the sequence of processes $\{g^{N}_{s}\}_{s\ge 0}$. In Section \ref{sec:Levyjump},
we prove the properties of the limiting process $g_{s}$ enumerated in
Theorem \ref{thm:property-gs} and the
L\'{e}vy-Khintchine representation \eqref{eqn:pointprocess}.

\section{Proof of Theorem \ref{thm1.1}\label{sec:proof1.1}}
\subsection{Preliminaries on the Skorohod space $\mathbb D$}
Let $(\mathcal S, d)$ be a separable and complete metric
space, and  let
$\mathbb D (\mathcal S)\mathdef \mathbb D ([0, +\infty), (\mathcal S,
d))$ be the spaces of all $\mathcal S$-valued \emph{c\`{a}dl\`{a}g}
functions $f$ with domain $[0, +\infty)$, i.e., $f\in \mathbb D(\mathcal S)$ is
right-continuous and has left
limits. 
The space $\mathbb D(\mathcal S)$ is metrizable, and under the usual
\emph{Skorohod metric}, the space $\mathbb D(\mathcal S)$ is complete
and separable. We refer to \cite[Section 13]{Billingsley1999} for
details on the Skorohod topology. Here, we quote the following
theorem, which gives a sufficient condition for the weak convergence
in $\mathbb D(\mathcal S)$.

\begin{theorem}\label{thm:convD} Let $\{X_{t}^{N}\}_{t \ge 0}$ and
  $\{X_{t}\}_{t \ge 0} \in \mathbb D(\mathcal S)$ be
  $\mathcal S$-valued processes. Let $\Delta$ be some dense subset of
  $[0, +\infty)$. If the sequence $\{X^{N}\}_{t\ge 0}$ is tight
  (relative to the Skorohod metric) and if
  $(X^{N}_{t_{1}}, X^{N}_{{t_{2}}}, \ldots, X_{t_{m}}^{N})
  \Longrightarrow (X_{t_{1}}, X_{{t_{2}}}, \ldots, X_{t_{m}})$ as
  $N \to \infty$ for all $t_{1}, \ldots, t_{m} \in \Delta$, then
  $\{X^{N}_{t}\}_{t\ge 0 }\Longrightarrow \{X_{t}\}_{t \ge 0}$ as
  $N \to \infty$.
\end{theorem}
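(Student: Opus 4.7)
The plan is to combine the tightness hypothesis with the finite-dimensional convergence via Prokhorov's theorem and a subsequence principle, with the standard caveat that coordinate projections on $\mathbb D(\mathcal S)$ are not globally continuous. First, since $(\mathcal S,d)$ is Polish, so is $\mathbb D(\mathcal S)$ under the Skorohod metric, and Prokhorov's theorem applies: tightness of $\{X^{N}\}$ implies relative compactness, so every subsequence admits a further subsequence $\{X^{N_{k}}\}$ converging weakly in $\mathbb D(\mathcal S)$ to some process $X^{*}$. It then suffices to show that any such subsequential limit has the same law as $X$, from which weak convergence of the full sequence follows by the usual subsequence argument.

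Next, I would identify the finite-dimensional distributions of $X^{*}$ through the continuous mapping theorem. The subtle point is that the projection $\pi_{t}\colon f\mapsto f(t)$ is not continuous on all of $\mathbb D(\mathcal S)$: it fails continuity exactly at paths having a jump at time $t$. However, for the law $\mu^{*}$ of $X^{*}$ on $\mathbb D(\mathcal S)$, the set
\[
T_{\mu^{*}} := \bigl\{\, t \ge 0 \,:\, \mu^{*}\{f \in \mathbb D(\mathcal S) : f \text{ is discontinuous at } t\} = 0 \,\bigr\}
\]
is co-countable in $[0,+\infty)$, because any càdlàg path has at most countably many jumps. Consequently $\Delta':= \Delta \cap T_{\mu^{*}}$ is still dense in $[0,+\infty)$, and for any $t_{1},\dots ,t_{m}\in \Delta'$ the joint projection $\pi_{t_{1},\dots ,t_{m}}$ is continuous $\mu^{*}$-a.s. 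Applying the continuous mapping theorem along $\{N_{k}\}$ gives $(X^{N_{k}}_{t_{1}},\dots ,X^{N_{k}}_{t_{m}})\Longrightarrow (X^{*}_{t_{1}},\dots ,X^{*}_{t_{m}})$, while the hypothesis says the same sequence converges to $(X_{t_{1}},\dots ,X_{t_{m}})$. By uniqueness of weak limits in $\mathcal S^{m}$ the two finite-dimensional laws coincide on $\Delta'$.

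Finally, I would upgrade agreement on a dense set to agreement of laws on $\mathbb D(\mathcal S)$. Since both $X^{*}$ and $X$ have càdlàg paths, right-continuity together with the density of $\Delta'$ show, by a standard monotone class / $\pi$-$\lambda$ argument on cylinder sets built from $\Delta'$, that the laws of $X^{*}$ and $X$ agree on the Borel $\sigma$-algebra of $\mathbb D(\mathcal S)$. Hence $X^{*} \equalD X$, and the subsequence principle gives $X^{N}\Longrightarrow X$ in $\mathbb D(\mathcal S)$.

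The main obstacle is precisely the failure of continuity of projections on the Skorohod space: without care, finite-dimensional convergence at arbitrary times does not pass through the continuous mapping theorem. The workaround — restricting to the dense co-countable set $T_{\mu^{*}}$ of continuity times of the (a priori unknown) limit — is classical and handled in \cite[Chapter 3]{Billingsley1999}; the rest is Prokhorov and a cylinder-algebra identification of measures, both of which are routine in the Polish setting.
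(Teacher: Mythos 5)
The paper offers no proof of Theorem \ref{thm:convD}: it is quoted as a standard fact about the Skorohod space, with \cite[Section 13]{Billingsley1999} and \cite{aldous1981weak} given as references, so there is no internal argument to compare yours against. Your proof is the classical one --- Prokhorov's theorem to extract weakly convergent subsequences, identification of the finite-dimensional distributions of any subsequential limit $X^{*}$ via the almost-sure continuity of the coordinate projections at times that are not fixed discontinuities of the limit law, and the observation that two c\`adl\`ag processes whose finite-dimensional distributions agree on a dense set of times have the same law on $\mathbb D(\mathcal S)$ --- and it is essentially correct. The one step that needs more care is the claim that $\Delta' = \Delta \cap T_{\mu^{*}}$ is automatically dense: $T_{\mu^{*}}$ is indeed co-countable, but a dense set with a countable set removed need not remain dense when $\Delta$ itself is countable (take $\Delta = \mathbb Q_{\ge 0}$ and a subsequential limit law with a fixed discontinuity at every nonnegative rational). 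The standard remedies are either to formulate the theorem with $\Delta$ equal to the co-countable set of almost-sure continuity times of the prescribed limit $X$ (Billingsley's version of the statement), or to note that in the present paper the finite-dimensional convergence is verified for every $s \ge 0$, so that $\Delta$ may be taken to be all of $[0,+\infty)$ and your intersection argument goes through verbatim. With that caveat, the proof is complete; the remaining ingredients (Prokhorov in the Polish space $\mathbb D(\mathcal S)$, the $\pi$--$\lambda$ identification of laws from cylinder sets over $\Delta'$ using right-continuity) are used correctly.
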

For the particular case 
where $(\mathcal S, d)$ is the real line with the Euclidean metric, 
 Aldous \cite[Theorem 1]{Aldous-tightness}  gave a useful sufficient condition
for the tightness of a sequence
$\{X^{N}_{t}\}_{t \in [0,1]}$ in the space
$\mathbb D([0, 1], (\mathbb R, \,\lvert\cdot\rvert))$.  He also
pointed out  \cite[Theorem 4.4]{aldous1981weak} that, with a slight
modification, the criterion can be generalized to $\mathcal S$-valued
stochastic processes over the half line $[0, +\infty)$ as long as
$(\mathcal S, d)$ is a complete and separable metric space. We state
Aldous' criterion in this form below.
\begin{theorem}\label{thm:tightness}
  Let $(\mathcal S, d)$ be a complete and separable metric space and
  let $\{X^{N}_{s}\}_{s\ge 0} \in \mathbb D ([0, +\infty), (\mathcal
  S, d))$ be a sequence of $\mathcal S$-valued stochastic
  processes. A sufficient condition for tightness of the sequence
  $\xset {X^{N}_{s}}_{s \geq 0}$ is that the following two conditions
  hold:

  \medskip \noindent 
  \underline{\text{Condition }$ 1^{\circ}$}. For each $s$, the
  sequence 
  $\xset {X_{s}^{N}}_{N\in \mathbb N}$ is tight in $(\mathcal S, d)$, and\\
  \underline{\text{Condition }$ 2^{\circ}$}. For any $L > 0$, any
  sequence of constant $\delta_{N} \downarrow 0$, and any sequence of
  stopping times $\tau_{N} $ for
  $\{X_{s}^{N}\}_{s\ge 0}$ that are all upper bounded by $L$,
\begin{equation}
\label{eq:condition2}
d(X_{\tau_{N}+\delta_{N}}^{N}, X_{\tau_{N}}^{N}) \stackrel{P}{\longrightarrow} 0.
\end{equation}
\end{theorem}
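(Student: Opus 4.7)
The plan is to verify tightness in $\mathbb D([0, +\infty), (\mathcal S, d))$ by localizing to each compact interval $[0, L]$ and then applying the standard characterization that tightness in $\mathbb D([0, L], (\mathcal S, d))$ is equivalent to (a) tightness of the marginals $\{X^N_t\}$ in $\mathcal S$ for each $t$ in a dense subset of $[0, L]$, together with (b) the Skorohod modulus of continuity condition
\[
\lim_{\delta \downarrow 0} \limsup_{N \to \infty} \Pro\bigl(w'(X^N, \delta, L) > \varepsilon\bigr) = 0 \qquad \text{for every } \varepsilon > 0,
\]
where $w'(x, \delta, L)$ is the infimum, over partitions $0 = t_0 < t_1 < \cdots < t_k = L$ with $\min_i(t_i - t_{i-1}) > \delta$, of the maximal oscillation of $x$ inside any one subinterval $[t_{i-1}, t_i)$. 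Condition $1^\circ$ supplies (a) directly, so the real content of the theorem is the implication from Condition $2^\circ$ to (b).

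To derive (b), I would argue by contradiction. Suppose that along some subsequence there are $\varepsilon, \eta > 0$ and $\delta_N \downarrow 0$ with $\Pro\bigl(w'(X^N, \delta_N, L) > \varepsilon\bigr) \ge \eta$. On this bad event the c\`{a}dl\`{a}g path must exhibit a time window of length at most $2\delta_N$ inside which the oscillation exceeds $\varepsilon$ \emph{and} cannot be removed by a single partition point; since $w'$ by design tolerates isolated jumps, a large value of $w'$ forces at least two separate bursts of fluctuation packed into a short interval. I would then define
\[
\tau_N \mathdef \inf\Bigl\{ t \in [0, L] : \sup_{s \in [t, t + \delta_N]} d(X^N_s, X^N_t) > \varepsilon/2 \Bigr\} \wedge L,
\]
which, thanks to the c\`{a}dl\`{a}g paths and the right-continuous filtration generated by $X^N$, is a bounded stopping time. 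On the bad event one can then extract a deterministic offset $\delta_N' \in [0, 2\delta_N]$ at which $d(X^N_{\tau_N + \delta_N'}, X^N_{\tau_N})$ remains bounded away from zero with uniformly positive probability. This contradicts \eqref{eq:condition2} applied to the stopping times $\tau_N$ and deterministic increments $\delta_N'$, completing the proof.

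The principal obstacle is precisely the extraction of $\delta_N'$: because $w'$ is deliberately insensitive to isolated jump discontinuities, translating a lower bound on $w'(X^N, \delta_N, L)$ into a lower bound on a displacement $d(X^N_{\tau_N + \delta_N'}, X^N_{\tau_N})$ over a deterministic increment requires a careful case analysis that separates genuine oscillations from single jumps. One must also arrange the offset $\delta_N'$ so that all hypotheses of Condition $2^\circ$ are respected (bounded stopping time, vanishing deterministic increment). Since $(\mathcal S, d)$ is only assumed to be complete and separable and carries no linear structure, the analysis must proceed using the metric $d$ and the triangle inequality alone; this adds bookkeeping but no conceptual difficulty, and the reduction from $[0, +\infty)$ to each $[0, L]$ at the outset is standard and purely topological.
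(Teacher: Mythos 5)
First, note that the paper offers no proof of this statement: it is quoted from Aldous (\cite{Aldous-tightness}, Theorem 1, and \cite{aldous1981weak}, Theorem 4.4), so your attempt can only be measured against Aldous' original argument. Your skeleton --- restrict to $[0,L]$, invoke the characterization of tightness in $\mathbb D([0,L],(\mathcal S,d))$ via marginal tightness plus the modulus $w'$, and derive the $w'$ condition from Condition $2^{\circ}$ by contradiction --- is the right frame and is essentially how Aldous proceeds. But the two steps you flag as delicate are not merely delicate; as written they fail, and repairing them requires the one idea your sketch omits.

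The first problem is that the random time $\tau_N \mathdef \inf\{t : \sup_{s\in[t,t+\delta_N]} d(X^N_s, X^N_t) > \varepsilon/2\}\wedge L$ is not a stopping time for the natural filtration of $X^N$: deciding whether $\tau_N\le t$ requires observing the path on $[t,t+\delta_N]$, so $\{\tau_N\le t\}$ is measurable with respect to $\mathcal F_{t+\delta_N}$ rather than $\mathcal F_t$, and Condition $2^{\circ}$ --- which quantifies only over stopping times --- cannot be applied to it. The second, more fundamental problem is that the ``extraction of a deterministic offset $\delta_N'$'' that you defer is the entire content of the theorem. The hypothesis controls $d(X^N_{\tau+u},X^N_\tau)$ only for deterministic $u$, whereas a large value of $w'$ produces an oscillation at a \emph{random} offset from $\tau_N$; no single fixed $u^{*}$ need catch that oscillation with probability bounded below (two compensating jumps occurring at a uniformly distributed random location already illustrate the difficulty). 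Aldous resolves both issues at once with a key lemma absent from your sketch: Condition $2^{\circ}$ implies that for every $\varepsilon,\eta>0$ there exist $\delta_0>0$ and $N_0$ such that for all $N\ge N_0$ and all pairs of genuine stopping times $\sigma\le\tau\le\sigma+\delta_0$ one has $\Pro\bigl(d(X^N_\sigma,X^N_\tau)>\varepsilon\bigr)\le\eta$. Its proof is a Fubini/averaging argument: integrating $\Pro\bigl(d(X^N_{\sigma+u},X^N_\sigma)>\varepsilon\bigr)$ and $\Pro\bigl(d(X^N_{\tau+v},X^N_\tau)>\varepsilon\bigr)$ over small deterministic $u,v$ shows that with high probability there is a common time point $\sigma+u=\tau+v$ at which both displacements are at most $\varepsilon$, whence $d(X^N_\sigma,X^N_\tau)\le 2\varepsilon$ by the triangle inequality. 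With this lemma one runs the contradiction on the adapted times $\tau_0=0$, $\tau_{k+1}=\inf\{t>\tau_k : d(X^N_t,X^N_{\tau_k})\ge\varepsilon\}$: if $w'(X^N,\delta,L)>2\varepsilon$ with probability at least $\eta$, then some consecutive pair satisfies $\tau_{k+1}\le\tau_k+\delta$ with non-negligible probability, contradicting the lemma. Without this upgrade from deterministic offsets to pairs of stopping times, your contradiction does not close.
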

\begin{remark} In the case when $(\mathcal S, d) = (\mathcal C_{0}(\zz
  R), \lVert\cdot\lVert_{\infty})$, \eqref{eq:condition2} is
  equivalent to  
\begin{equation}
\label{eq:condition2-1}
\lVert X_{\tau_{N}+\delta_{N}}^{N} - X_{\tau_{N}}^{N} \rVert_{\infty}\stackrel{P}{\longrightarrow} 0.
\end{equation}
\end{remark}

\subsection{Proof of Tightness} In this section, we prove that the
sequence $\{g^{N}_{s}\}_{N\in \mathbb N}$ is tight in
$\mathbb D (\mathcal C_{0}(\mathbb R))$ by verifying Condition
1$^{\circ}$ and Condition $2^{\circ}$.

\bigskip 
To verify { Condition $1^{\circ}$,  we will show that for any $s\ge 0$ fixed
and any $\epsilon > 0$, we can find a compact subset
$\mathsf K \subset \mathcal C_{0}(\mathbb R)$ such that
$\Prob (g_{s}^{N}\in \mathsf K) > 1-\epsilon$ for all $N$ large. Let
$\zeta_{sN}$ be the extinction time of the branching random walk
gotten by amalgamating the branching random walks $(Z_{1,n})_{n\ge 0}, (Z_{2,n})_{n\ge 0}, \cdots
,(Z_{\lfloor sN\rfloor, n})_{n\ge 0}$, that is, $\zeta_{sN}$ is the maximum of the
extinction times of the branching random walks
initiated by the first $\lfloor sN\rfloor$ ancestral particles. By a
fundamental theorem of Kolmogorov,
\begin{displaymath}
  P \xset {\zeta_{1}>n} \sim \frac{2}{n \sigma_{\nu}^{2}} \quad
  \textrm {as} \; n \rightarrow\infty;
\end{displaymath}
consequently, for 
every $\epsilon > 0$, there exists  $H=H_{s, \epsilon}>0$ such that
\begin{equation}
  \label{eq:kolmogorov-tail}
  \Prob(\zeta_{sN} > NH) < \epsilon /2.
\end{equation}
Therefore, it suffices
to prove that there is a compact set  $\mathsf K \subset \mathcal
C_{0}(\mathbb R)$ such that for all $N$ large,
\begin{equation}
  \label{eq:tightness1}
  \Prob\left(\{g_{s}^{N}\in \mathsf K\} \cap G_{sN}\right) \ge
  1-\frac{\epsilon}{2},  \quad \textrm {where} \quad
  G_{sN}:= \xset {\zeta_{sN}\le NH}.
\end{equation}

To establish inequality \eqref{eq:tightness1} we will use
Kolmogorov-\v{C}entsov criterion (see, e.g., \cite[Chapter 2, Problem
4.11]{GTM-BM1991}). It suffices to prove
that 
\begin{align}
\label{eqn:momentbd0}
\tag{2.5a}
\sup_{N}\E[g_{s}^{N}(0)\mathbf 1_{G_{sN}}] < \infty,
\end{align} and that for some $m\ge 3$, there exists $C = C(s, m, H)>0$ such that for
all $x, y \in\mathbb R$ and for all $N$ sufficiently large,
\begin{align}
\label{eqn:momentbd}
\tag{2.5b}
\mathbb E \left [g_{s}^{N}(x) - g_{s}^{N}(y)\right ]^{2m}
  \mathbf 1_{G_{sN}}\le C |x-y|^{\frac{2m}{5}}.  
\end{align}
Note that the requirement $m \geq 3$ in \eqref{eqn:momentbd} ensures that the exponent $2m/5$ is
larger than $1$, as is needed for the Kolmogorov-\v{C}entsov criterion. We will rely on the following estimates of
\cite{lalley1d} to compute these bounds.

\begin{proposition}\label{prop:lalley71} \cite[Proposition 5]{lalley1d}
  Let $Z_{n}(x)$ be the number of particles at location
  $x\in \mathbb Z$ and time $n\in \mathbb Z_{+}$ in a branching random
  walk, started from a single particle at $0\in\mathbb Z$, with
  offspring distribution $\nu$ and step distribution $F$. For each
  $m\in \mathbb N$, there is constant $C_{m}$ such that for all
  $x, y\in \mathbb Z$ and all $ n\ge 1$,
\begin{align}
\setcounter{equation}{5}
\E Z_{n}(x)^{m}&\le C_{m}n^{m/2-1} \label{eqn:lalley70} \\
|\E(Z_{n}(x) - Z_{n}(y))^{m}| &\le C_{m}n^{2m/5-1}|x-y|^{m/5} 
\label{eqn:lalley71}
\end{align}
\end{proposition}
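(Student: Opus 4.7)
The plan is to prove both bounds via many-to-few formulas, which are standard for critical Galton--Watson trees equipped with a spatial displacement. First I would reduce $\E Z_{n}(x)^{m}$ to a sum over rooted binary skeletons describing the MRCA structure of $m$ selected particles together with integer split-time coordinates. Because $\nu$ has mean $1$ and variance $\sigma_{\nu}^{2}$, the $m$-th factorial moment of the level-$n$ population grows like $c_{m}\sigma_{\nu}^{2(m-1)}n^{m-1}$, and conditionally on the skeleton the relative positions of the selected particles are distributed as $m$ independent random walks run along the branches. Writing $p_{r}(z):=\Pro(S_{r}=z)$ for the step-distribution random walk, the event that all $m$ particles land at $x$ factorises into a product of $m$ values of $p$ at the leaves of the skeleton.

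For the first bound \eqref{eqn:lalley70}, the one-dimensional local central limit theorem yields the uniform estimate $\sup_{z}p_{r}(z)\le C r^{-1/2}$, so each of the $m$ leaf factors contributes at most $C n^{-1/2}$, the $m-1$ split-time sums contribute at most $n^{m-1}$, and their product is $C n^{m-1}\cdot n^{-m/2}=C n^{m/2-1}$. For the second bound \eqref{eqn:lalley71}, I would apply the same expansion to the signed sum
\[
    (Z_{n}(x)-Z_{n}(y))^{m}=\sum_{v_{1},\ldots ,v_{m}}\prod_{i=1}^{m}\bigl(\mathbf 1\{l(v_{i})=x\}-\mathbf 1\{l(v_{i})=y\}\bigr),
\]
so that each leaf factor becomes a discrete gradient $p_{r}(x-z)-p_{r}(y-z)$. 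The crucial analytic input is the interpolation estimate
\[
    |p_{r}(x-z)-p_{r}(y-z)|\le C\,(r^{-1/2})^{4/5}(r^{-1}|x-y|)^{1/5}=C\,r^{-3/5}|x-y|^{1/5},
\]
obtained by interpolating between the trivial bound $\|p_{r}\|_{\infty}\le C r^{-1/2}$ and the Lipschitz-type bound $|p_{r}(u)-p_{r}(v)|\le C r^{-1}|u-v|$ (which in turn follows from a sharp LCLT or from a direct Fourier estimate using only that $F$ has span one, mean zero and finite variance). Raising this to the $m$-th power at the leaves and multiplying by $n^{m-1}$ from the split-time sums produces the claimed $n^{2m/5-1}|x-y|^{m/5}$.

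The main obstacle I anticipate is the careful bookkeeping required when several of the indices $v_{i}$ coincide, so that the skeleton degenerates and the effective number of independent leaves drops. These ``diagonal'' contributions come with higher combinatorial multiplicity, and one must verify separately that they still obey the stated exponents; since the proposition only asserts an upper bound on the absolute value, invoking the triangle inequality before exploiting any cancellation is sufficient, provided one redoes the counting for each reduced value of $m$ and checks consistency. A secondary technical point is pinning down the Lipschitz bound on $p_{r}$ with constants depending only on $\sigma_{F}^{2}$ and the span assumption; this is standard Fourier analysis but must be performed once so that the exponent $2m/5-1$ propagates cleanly through the many-to-few sum.
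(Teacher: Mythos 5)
First, a point of comparison: the paper does not prove this proposition at all --- it is imported verbatim from \cite[Proposition 5]{lalley1d} --- so the only benchmark is the argument in that reference, which proceeds along essentially the lines you describe (a moment expansion over the genealogical structure combined with local limit theorem estimates). You have correctly reverse-engineered the analytic heart of the matter: interpolating between $\sup_{z}p_{r}(z)\le Cr^{-1/2}$ and the gradient bound $|p_{r}(u)-p_{r}(v)|\le Cr^{-1}|u-v|$ with weights $4/5$ and $1/5$ gives $|p_{r}(x-z)-p_{r}(y-z)|\le Cr^{-3/5}|x-y|^{1/5}$, which is exactly \eqref{eqn:lalley71} at $m=1$ and is the source of the exponents $2m/5-1$ and $m/5$. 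Your treatment of the diagonal terms (abandon the cancellation, revert to the non-gradient bound for the coincident particles) also closes: each coincidence trades one gradient leaf and one split-time sum for a plain leaf, and the resulting ratio to the target is $n^{-3/10}|x-y|^{-2/5}\le 1$ for $|x-y|\ge 1$.

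There are, however, two concrete gaps. (1) The accounting ``each of the $m$ leaf factors contributes at most $Cn^{-1/2}$ and the $m-1$ split-time sums contribute $n^{m-1}$'' is not valid as written: the terminal branch below a split at height $k_{j}$ has length $n-k_{j}$, which can be $O(1)$, so the sup bound on that leaf is $C(n-k_{j})^{-1/2}$, not $Cn^{-1/2}$; and if you sup out \emph{every} leaf and then sum freely over split times, a cherry at height $k$ contributes $(n-k)^{-1}$ and $\sum_{k<n}(n-k)^{-1}=O(\log n)$, so the naive bound overshoots the target. The repair is standard but must be done: process the skeleton from the leaves inward, at each binary branch point at height $k$ writing $p_{n-k}(x-z)^{2}\le \sup_{w}p_{n-k}(w)\cdot p_{n-k}(x-z)$ so that one copy is convolved onward (gradients convolve to gradients, so the same works for \eqref{eqn:lalley71}); then each of the $m-1$ branch points contributes $\sum_{k}(n-k)^{-1/2}=O(n^{1/2})$ (resp.\ $O(n^{2/5})|x-y|^{1/5}$) and the residual full convolution contributes $p_{n}(x)\le Cn^{-1/2}$ (resp.\ $Cn^{-3/5}|x-y|^{1/5}$), yielding $n^{m/2-1}$ and $n^{2m/5-1}|x-y|^{m/5}$ with no logarithmic loss. (2) Restricting to \emph{binary} skeletons is legitimate only for the leading order; the exact $m$-th moment expansion contains branch points of degree up to $m$ weighted by the corresponding factorial moments of $\nu$, and indeed $\E Z_{n}(x)^{m}=\infty$ when $\E\nu^{m}=\infty$. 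Your appeal to ``mean $1$ and variance $\sigma_{\nu}^{2}$'' therefore cannot suffice; you need $\E\nu^{m}<\infty$ for the orders of $m$ used. This hypothesis is also silently missing from the paper's own standing assumptions (it holds in the setting of \cite{lalley1d}), but a self-contained proof should state it and include the non-binary skeletons in the bound.
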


\begin{corollary}
  \label{prop:claim} Let $Z_{n}(x)$ be  as in Proposition
  \ref{prop:lalley71}. Then, for any $H > 0$ and  $m \ge 1$, there exists
  $C=C_{m, H}$ such that for all $N\ge 1$ and
  $x, y \in \mathbb Z/\sqrt{N}$,
\begin{align}
\label{eqn:claim}
\sum_{n_{1}, \ldots, n_{m}=1}^{\lfloor NH \rfloor} \left |  \mathbb E
  \prod_{l=1}^{m} \left( Z_{n_{l}} (\sqrt{N}x)-Z_{n_{l}}
  (\sqrt{N}y)\right) \right | \le C |x-y|^{\frac{m}{5}}N^{\frac{3m}{2}-1}. 
\end{align} 
\end{corollary}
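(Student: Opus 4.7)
The plan is to combine the generalized Hölder inequality with the signed moment bound \eqref{eqn:lalley71} from Proposition \ref{prop:lalley71}. Write $A_{l}:= Z_{n_{l}}(\sqrt{N}x) - Z_{n_{l}}(\sqrt{N}y)$ and note that the spatial rescaling $|\sqrt{N}x - \sqrt{N}y| = \sqrt{N}|x-y|$ converts the integer-scale bound \eqref{eqn:lalley71} into
$$
|\mathbb E A_{l}^{p}| \le C_{p}\, n_{l}^{2p/5-1}\, N^{p/10}\, |x-y|^{p/5}
$$
for any positive integer $p$. This pattern — one factor of $N^{1/10}$ for each unit $1/5$ in the displacement exponent — is what will ultimately produce the $N^{m/10}$ correction to the naive time-sum estimate.

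The key step is to apply the generalized Hölder inequality with a collection of \emph{even} positive integer exponents $p_{1},\dots,p_{m}$ satisfying $\sum_{l} 1/p_{l} = 1$, so that $|A_{l}|^{p_{l}} = A_{l}^{p_{l}}$ and the signed bound above is directly applicable. For $m = 1$ no Hölder step is needed; for even $m \ge 2$ one takes $p_{l} = m$; for odd $m \ge 3$ one takes $p_{1} = \cdots = p_{m-1} = 2(m-1)$ and $p_{m} = 2$, which satisfies $(m-1)/(2(m-1)) + 1/2 = 1$. Hölder then yields
$$
\Bigl|\, \mathbb E \prod_{l=1}^{m} A_{l} \,\Bigr|
\;\le\; \prod_{l=1}^{m} \Bigl(\mathbb E A_{l}^{p_{l}}\Bigr)^{1/p_{l}}
\;\le\; C\, |x-y|^{m/5}\, N^{m/10}\, \prod_{l=1}^{m} n_{l}^{2/5 - 1/p_{l}}.
$$

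It remains to sum over $(n_{1}, \dots, n_{m}) \in \{1, \dots, \lfloor NH \rfloor\}^{m}$. Each exponent $2/5 - 1/p_{l}$ lies in $[-1/10,\, 2/5)$, well above $-1$, so $\sum_{n=1}^{\lfloor NH\rfloor} n^{2/5 - 1/p_{l}} \le C_{H, p_{l}}\, N^{7/5 - 1/p_{l}}$. The $m$-fold product of these sums is bounded by a constant times $N^{\sum_{l}(7/5 - 1/p_{l})} = N^{7m/5 - 1}$, the $-1$ coming precisely from the Hölder constraint $\sum_{l} 1/p_{l} = 1$. Combining this with the $N^{m/10}$ spatial prefactor gives the total exponent $m/10 + 7m/5 - 1 = 3m/2 - 1$, as claimed. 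The principal (and rather modest) obstacle is finding an even-integer Hölder partition when $m$ is odd, since the naïve choice $p_{l} = m$ fails to guarantee $|A_{l}|^{p_{l}} = A_{l}^{p_{l}}$ and hence does not interface cleanly with the signed bound \eqref{eqn:lalley71}; the mixed partition above sidesteps this, and one checks by inspection that the final exponent accounting produces the sharp power $3m/2 - 1$ in both the even and odd cases.
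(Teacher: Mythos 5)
Your proof is correct and follows essentially the same route as the paper: the generalized H\"older inequality to decouple the time indices, the signed moment bound \eqref{eqn:lalley71} of Proposition \ref{prop:lalley71} applied at the integer points $\sqrt{N}x,\sqrt{N}y$ (yielding the $N^{m/10}$ factor), and the $m$-fold time sum contributing $N^{7m/5-1}$ via the constraint $\sum_l 1/p_l=1$. The only difference is that the paper treats even $m$ with $p_l=m$ and dismisses odd $m$ as ``similar,'' whereas your mixed even-exponent partition $p_1=\cdots=p_{m-1}=2(m-1)$, $p_m=2$ supplies the detail needed to keep the exponents even (so that \eqref{eqn:lalley71} applies to $\mathbb E|A_l|^{p_l}$) while still achieving $\sum_l 1/p_l=1$, which is exactly what is required for the sharp power $N^{3m/2-1}$.
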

\begin{proof}
Suppose first $m$ is even. Then by a trivial extension of  H\"older's
inequality (see, e.g.,\cite[pp. 4]{Pons-Ineq2013}) and Proposition~\ref{prop:lalley71},
\begin{align*}
& \ \ \ \ \ \sum_{n_{1}, \ldots, n_{m}=1}^{\lfloor NH \rfloor}  \left
                 | \mathbb E \prod_{l=1}^{m} \left( Z_{n_{l}}
                 (\sqrt{N}x)-Z_{n_{l}} (\sqrt{N}y)\right)  \right | 
  \\
& \le \sum_{n_{1}, \ldots, n_{m}=1}^{\lfloor NH \rfloor} \prod_{l=1}^{m} \left | \mathbb E\left( Z_{n_{l}} (\sqrt{N}x)-Z_{n_{l}} (\sqrt{N}y) \right)^{m} \right|^{\frac{1}{m}}\\
& =  \left \{ \sum_{n_{1}=1}^{\lfloor NH \rfloor}  \left |  \mathbb E\left( Z_{n_{1}} (\sqrt{N}x)-Z_{n_{1}} (\sqrt{N}y) \right)^{m} \right|^{\frac{1}{m}} \right \}^{m}\\
&\le \left \{ \sum_{n_{1}=1}^{\lfloor NH \rfloor}  \left [ C_{m}n_{1}^{\frac{2m}{5}-1}|x-y|^{\frac{m}{5}}N^{\frac{m}{10}}\right]^{\frac{1}{m}} \right \}^{m}\\
&\le C_{m, H}|x-y|^{\frac{m}{5}}N^{\frac{3m}{2}-1}.
\end{align*}
The case when $m$ is odd is similar.
\end{proof}

\begin{proof}[Proof of \textup{Condition 1}$^\circ$] 
The bound in \eqref{eqn:momentbd0} is easy to check using \eqref{eqn:lalley70} with $m=1$ and $x=0$ and the linearity of expectation. In particular, 
\begin{align*}
\E[g_{s}^{N}(0)\mathbf 1_{G_{sN}}]  & \le \frac{sN}{N^{3/2}} \left(1+\sum_{n=1}^{NH} C_{1}n^{1/2-1}\right) = \frac{s}{\sqrt{N}}\left(1+O(\sqrt{N})\right) < \infty.
\end{align*}
For \eqref{eqn:momentbd}, first of all, by triangle inequality and the assumption
  that $\bar X^{N} (x)$ is defined by linear interpolation, we need
  only consider $x, y \in \mathbb Z/\sqrt{N}$ in
  \eqref{eqn:momentbd}.

  Let $Z_{i, n}(x)$ be the number of particles at site
  $x \in \mathbb Z$ in generation $n$ of the $i$-th ancestral
  particle. The left side of \eqref{eqn:momentbd} is clearly bounded by
\begin{align*}
  N^{-3m}\cdot \mathbb E\left \{ \sum_{i=1}^{\lfloor
  sN\rfloor}\sum_{n=0}^{\lfloor NH\rfloor}   \left[ Z_{i,
  n}(\sqrt{N}x)-Z_{i, n}(\sqrt{N}y)\right] \right \}^{2m}. 
\end{align*}
We expand the product under the expectation sign and write it as a sum
of expectations:
\begin{align}
\label{eqn:oneterm}
N^{-3m}\cdot \sum_{i_{1}, \ldots, i_{2m}=1}^{\lfloor sN \rfloor} \sum_{n_{1}, \ldots, n_{2m}=0}^{\lfloor NH \rfloor} \mathbb E\prod_{j=1}^{2m}\left[ Z_{i_{j}, n_{j}}^{\lfloor sN\rfloor}(\sqrt{N}x)-Z_{i_{j}, n_{j}}^{\lfloor sN\rfloor}(\sqrt{N}y)\right].
\end{align}
When the product inside the expectations is expanded, each term is a
product of $2m$ differences of occupation counts in one of the
branching random walks $Z_{i_{j}}$ in some generation $n_{j}$.
Observe that repetitions of the
indices $i_{j}$ and $n_{j}$ are allowed.

Note that Proposition \ref{prop:lalley71} applies only for generation $n\ge 1$, whereas
$n_{1}, \ldots, n_{2m}$ in \eqref{eqn:oneterm} run from generation
$0$. However, since originally all particles are placed at the origin,
we lose nothing by summing from $1$ to $\lfloor NH\rfloor$ as long as
$xy\ne 0$. The case when $xy=0$ will be treated separately at the end.

Suppose $x\ne 0$, $y\ne 0$.  If $i_{j_{1}} \ne i_{j_{2}}$ are indices
of two distinct ancestral individuals, then the differences
$\left[Z_{i_{j_{1}}, n_{j_{1}}}(\sqrt{N}x)-Z_{i_{j_{1}},
    n_{j_{1}}}(\sqrt{N}y)\right]$ and
$\left[Z_{i_{j_{2}}, n_{j_{2}}}(\sqrt{N}x)-Z_{i_{j_{2}},
    n_{j_{2}}}(\sqrt{N}y)\right]$ are independent. Let $r$ be the
number of distinct $i_{j}$'s inside the expectation in \eqref{eqn:oneterm}; then
\eqref{eqn:oneterm} can be written as
\begin{align}
\label{eqn:factors}
N^{-3m}\sum_{r=1}^{2m}\sum_{\substack{1\le i_{1}<\cdots < i_{r}\le \lfloor sN\rfloor\\\sum_{j}m_{j}=2m}}
  \prod_{j=1}^{r}\left[ \sum_{n^{j}_{1}\ldots
  n^{j}_{m_{j}}=1}^{\lfloor NH \rfloor} \mathbb E \prod_{l=1}^{m_{j}}
  \left( Z_{i_{j}, n^{j}_{l}}
  (\sqrt{N}x)-Z_{i_{j}, n^{j}_{l}}
  (\sqrt{N}y)\right)\right]. 
\end{align}
For a particular term with $r$ distinct ancestors
$i_{1}, \ldots, i_{r}$ in which  $i_{j}$ occurs  $m_{j}$
times ($j=1,2,\ldots, r$), the expectation can be factored as a
product of $r$ expectations, where each expectation is an expectation
of the differences involving the offspring of only one ancestor at
time 0. Thus, we always have $\sum_{j=1}^{r}m_{j}=2m$. For each
bracketed factor in  \eqref{eqn:factors}, for each ancestor $i_{j}$, the
summation is over all possible choices of the generations
$n_{1}^{j}, \ldots, n_{m_{j}}^{j}$; this can be bounded using
Corollary  \ref{prop:claim} above. It follows that
\begin{align*}
  &\ \  N^{-3m} \sum_{r=1}^{2m}\sum_{\substack{1\le i_{1}<\cdots < i_{r}\le \lfloor sN\rfloor\\\sum_{j}m_{j}=2m}} \prod_{j=1}^{r}C(m_{j},
    H)|x-y|^{\frac{m_{j}}{5}}N^{\frac{3m_{j}}{2}-1}\\ 
  &\le\  C_{1}(m, s, H) \,N^{-3m} \sum_{r=1}^{2m}N^{r} \cdot
    N^{\frac{3}{2}\cdot
    2m-r}|x-y|^{\frac{2m}{5}}\\  
  &\le \ C(m, s, H)|x-y|^{\frac{2m}{5}},
\end{align*}
and \eqref{eqn:momentbd} is proved. 

Finally, we must deal with the case when $xy=0$. If $x=y=0$, then
both sides of
\eqref{eqn:momentbd} are zero. If $x=0$ and $y\ne 0$, then, because
all initial $\lfloor sN \rfloor$ particles are placed at zero, we can
write the left side of \eqref{eqn:momentbd} as
\[
 \mathbb E\left \{ \left(\sum_{i=1}^{\lfloor sN\rfloor}\sum_{n=1}^{NH}   
\frac{\left[ Z_{i, n}(0)-Z_{i, n}(\sqrt{N}y)\right]}{ N^{3/2}}\right)  +\frac{ \lfloor sN \rfloor}{N^{3/2}}\right \}^{2m}.
\]
It is not difficult to see that for large $N$, the first term
dominates, because this term can be handled exactly as in the case $xy \not =
0$. This proves that Condition 1$^{\circ}$  holds.
\end{proof}

\begin{proof}[Proof of \textup{Condition 2}$^{\circ}$]
  For each $N$ the process $g^{N}_{s}$ is piecewise constant in $s$,
  with jumps only at times $s$ that are integer multiples of
  $1/N$. Consequently, in verifying Condition 2 we may restrict
  attention to stopping times $\tau_{N}$ such that $N \tau_{N}$ is an
  integer between $0$ and $NL$. It is obvious from its definition that
  the discrete-time process $g^{N}_{s}$, with $s=0,1/N, 2/N, \cdots
  $, is non-decreasing and has stationary, independent increments;
  therefore, for any stopping time $\tau^{N}$ and any constant $\delta
  >0$, the increment $g^{N}_{\tau_{N}+\delta}-g^{N}_{\tau_{N}}$ has
  the same distribution as $g^{N}_{\delta}$. Therefore, to prove
  Condition 2$^{\circ}$ it is enough to show that for any $\epsilon
  >0$ there exists $\delta>0$ such that for all $N$ sufficiently large,
  \begin{displaymath}
    \Prob\left (\sup_{x\in \mathbb R}g_{\delta}^{N}(x) \ge \epsilon
    \right) \le \epsilon,
  \end{displaymath}
  equivalently,
  \begin{equation}
    \label{eq:tightness2}
    \Prob\left (\sup_{x\in \mathbb R}g_{1}^{\lfloor \delta N \rfloor}(x) \ge \epsilon \delta^{-3/2}
    \right) \le \epsilon.
  \end{equation}
  But we have already proved, in Condition 1$^{\circ}$, that the
  sequence of $\mathcal{C}_{0}(\mathbb{R})-$valued processes
  $g^{N}_{1}$ is tight, so there exists $K=K_{\epsilon}<\infty$ so
  large that for all $m \in \mathbb{N}$,
  \begin{displaymath}
    \Prob \left (\sup_{x\in \mathbb R}g_{1}^{m}(x) \ge K
    \right) \le \epsilon.
  \end{displaymath}
  By choosing $\delta >0$ so small that $\epsilon/\delta^{3/2}>K$, we
  obtain \eqref{eq:tightness2} for $N$ sufficiently large.
\end{proof}

\subsection{Uniqueness of the Limit Process}Since
$\{g_{s}^{N}\}_{s\ge 0}$  has stationary and
independent increments, any weak limit will also have these
properties. Therefore, to prove the uniqueness of the limit process it
suffices to show that for any fixed time $s>0$ there is only one
possible limit for the sequence $\{g^{N}_{s}\}_{N \in\mathbb{N}}$.

For any $N\in\mathbb{N}$, the random function $g^{N}_{s}(\cdot )$ is
defined by rescaling the occupation measure $X^{\lfloor sN \rfloor}$
of the branching random walk initiated by the first
$\lfloor sN \rfloor$ ancestral individuals (cf. equation
\eqref{eq:occupation-measure-rescaled}). The occupation measure 
$X^{\lfloor sN \rfloor}$ is defined by \eqref{eq:occupation-measure},
which can be rewritten as
\begin{displaymath}
  X^{m}(j)=\sum_{i=1}^{m} \sum_{n=0}^{\infty} Z_{i,n}(j), 
\end{displaymath}
where $Z_{i,n}(j)$ is the occupation counts at location $j$ of individuals in the $n$-th generation of the $i$-th labeled tree. 
As discussed in Remark \ref{rem:indef-integral}, to avoid invoking an indefinite integral operator in the weak limit, we consider the truncated 
occupation counts and the associated occupation density up to the $\lfloor NH\rfloor$-th generation for some $H>0$ fixed. Define
\begin{align*}
  X^{m,H}(j)&=\sum_{i=1}^{m} \sum_{n=0}^{\lfloor NH\rfloor} Z_{i,n}(j) \quad
  \textrm{and} \\
  g^{N}_{s,H}(x)&=N^{-3/2}\bar{X}^{\lfloor sN \rfloor,H} (\sqrt{N}x)
\end{align*}
where, as earlier, the bar denotes the function obtained by linear
interpolation. The same calculations as in the proof of Condition
1$^{\circ }$ show that for any fixed $H>0$ and $s>0$ the sequence
$g^{N}_{s,H}$ is tight. 

Watanabe's convergence theorem states that for any $s>0$, the rescaled measure-valued process converges weakly to the super-Brownian motion, i.e.,
\[
\left\{Y_{t}^{s, N}\right\}_{0\le t\le H} :=\left\{\frac{1}{N}\sum_{j\in \mathbb Z}\sum_{i=1}^{\lfloor Ns\rfloor}Z_{i, \lfloor tN \rfloor}(j)\delta_{j/\sqrt{N}}\right\} \Longrightarrow \left\{Y_{t}^{s}\right\}_{0\le t\le H},
\] 
Viewing the measure-valued process $Y_{t}^{s,N}$ as nonnegative continuous functions over $\mathbb R$, we define $\bar Y_{t}^{s, N}\in \mathcal C_{0}(\mathbb R)$ by setting
\[
\bar Y_{t}^{s, N}(x) := \frac{1}{N}\sum_{i=1}^{\lfloor Ns\rfloor}Z_{i, \lfloor tN \rfloor}(\sqrt{N}x), \quad \text{if } x\in \mathbb Z/\sqrt{N}.
\]
and then doing a linear interpolation. The above convergence implies that the weak convergence of the rescaled total occupation measure of the first $\lfloor NH\rfloor$ generations:
\[
\frac{1}{N}\sum_{t=0}^{\lfloor NH\rfloor} Y^{s, N}_{\frac{t}{N}}(x) \Longrightarrow \int_{0}^{H} Y_{t}^{s}(x) dt. 
\]
Notice that the left side is indeed $\frac{X^{\lfloor sN\rfloor, H}(\sqrt{N}x)}{N^{2}}$, which has
densities $g^{N}_{s,H}$. Consequently, any possible weak
subsequential limit of $\{g^{N}_{s,H}\}_{N\ge 1}$ in the function
space $\mathcal{C}_{0}(\mathbb{R})$ must be a density for the
occupation measure of the super-Brownian motion, that is, as $N\to\infty$
\begin{displaymath}
  g^{N}_{s,H} \Longrightarrow g_{s,H} \quad \textrm {where} \quad g_{s,
    H}(x)\mathdef \int_{0}^{H}Y^{s}(t, x)dt .  
\end{displaymath}
 But by
inequality \eqref{eq:kolmogorov-tail}, for any $\epsilon>0$ there
exists $H=H_{\epsilon}<\infty$ so large that for any $N$,
\begin{displaymath}
  \Prob \xset {g^{N}_{s}\not = g^{N}_{s,H}}\leq \epsilon.
\end{displaymath}
Consequently, the sequence $\{g^{N}_{s}\}_{N\in \mathbb N}$ must converge weakly to
$\int_{0}^{\infty}Y^{s}(t, x)dt$.

\section{\label{sec:Levyjump}Properties of the Limiting Process}
In this section, we prove properties of the limiting process $\{g_{s}\}_{s\ge 0}$ (Theorem \ref{thm:property-gs}) and characterize it using a Poisson point process (Theorem \ref{thm:levy-measure}). In order to make sense of the notion of
a ``subordinator'' on the function space $\mathcal C_{0} (\zz{R})$, we first briefly
review the definition of a \emph{Banach lattice}.

\begin{definition}\label{def:banachlattice} A {Banach lattice} is a
triple $(E, \lVert \cdot\rVert, \le)$ such that
\begin{enumerate}[\normalfont(a).]
\item  $(E, \lVert \cdot \rVert)$ is a Banach space with norm $\lVert\cdot\rVert$;
\item $(E, \le)$ is an ordered vector space with the partial ordering $\le$;
\item under $\le$, any pair $x, y \in E$ has a least upper bound
denoted by $x \vee y$ and a greatest lower bound denoted by $x \wedge
y$ (this is the ``lattice'' property); and 
\item  Set  $|x| \mathdef x \vee (-x)$. Then $|x|\le |y|$ implies
$\lVert x \rVert \le \lVert y\rVert$, $\forall x, y \in E$ (i.e.,
$\lVert \cdot \rVert$ is ``a
lattice norm''). 
\end{enumerate}
\end{definition}

\begin{example}
The Banach space $(\mathcal C_{0}(\mathbb R), \lVert \cdot
\rVert_{\infty})$ has a natural  partial ordering, defined by
\[
f \le g \textup{ if and only if } g(x)-f(x) \ge 0 \quad \text{for all}
\quad x\in \mathbb R.
\]
The triple $(\mathcal C_{0}(\mathbb R), \lVert \cdot \rVert_{\infty},
\le)$ clearly satisfies (a) and (b) in Definition
\ref{def:banachlattice}. The least upper bound and the greatest lower
bound are defined pointwise:
\[
(f\vee g) (x) = f(x) \vee g(x), \qquad (f\wedge g) (x) = f(x) \wedge g(x).
\]
Condition (d) can be verified easily. 
\end{example}

\begin{definition} Let $(E, \lVert \cdot\rVert, \le)$ be a Banach
lattice. An $E$-valued stochastic process $\{X_{t}\}_{t\ge 0}$ is a
\emph{subordinator} if $\{X_{t}\}_{t \ge 0}$ is a L\'evy process (that
is, $\{X_{t}\}_{t \ge 0}$ has stationary, independent increments) and with probability one,
for all $t \ge s \ge 0$,
\[
X_{t}-X_{s} \ge 0.
\]
A subordinator $\{X_{t}\}_{t\ge 0}$ is a \emph{pure jump} process if for every $t$,
\[
X_{t} = \sum_{s\le t}(X_{s}-X_{s-}).
\] 
\end{definition}

\begin{proof}[Proof of Theorem \ref{thm:property-gs}]
For (i), we have for each $N\ge 1$, 
\[
g_{s}^{N}(x) = \frac{\bar X^{\lfloor sN \rfloor}(\sqrt N x)}{N^{3/2}} = \frac{\lfloor sN\rfloor ^{3/2}}{N^{3/2}}\cdot \frac{\bar X^{1\cdot \lfloor sN \rfloor}\left (\sqrt{N} x\right)}{\lfloor sN\rfloor ^{3/2}} = \frac{\lfloor sN\rfloor ^{3/2}}{N^{3/2}} g_{1}^{\lfloor sN\rfloor}\left (\sqrt{\frac{N}{\lfloor sN \rfloor}}\, x \right).
\]
Taking $N\to\infty$ gives (i). The claim that $\{I_{s}\}_{s\ge 0}$ is a stable-2/3 subordinator follows from monotonicity of $g_{s}$ and the scaling relation above at $x=0$, which yields
\[
I_{s} = s^{3/2} I_{1}.
\]
For (iii), recall that a version of the stable--$\frac{1}{2}$ subordinator on $\zz{R}$ is the inverse local-time process of a standard Brownian motion $\{B_{t}\}_{t\ge 0}$
\[
	\tilde{\tau}_{s}=\inf \{t\geq 0\,:\, L^{0}_{t}>s\},
\]
where $L^{0}_{t}$ is the Brownian local time at location $0$ up to time $t$. 
The jumps of the process $\{\tilde{\tau}_{s} \}_{s\geq 0}$ are the
lengths of the excursions of the Brownian path.

Now consider a sequence of independent critical Galton-Watson trees
$\mathcal{T}_{i}$ with offspring distribution $\nu$, initiated by
particles $i=1,2,3,\dots$. Let $|\mathcal{T}_{i}|$ be the size (number of
vertices) of the $i$-th tree, and set $A_{N}\mathdef\sum_{i=1}^{N}|\mathcal{T}_{i}|$, 
the total number of vertices in the first $k$ trees. 
Then by a theorem of Le Gall \cite{LeGall2005}, as $N\to\infty$,
\[
	\{ A_{\lfloor sN\rfloor}/N^{2}\}_{s\geq 0} \Longrightarrow \{\tilde{\tau}_{s/\sigma_{\nu}}
	\}_{s\geq 0} \equalD \{\sigma_{\nu}^{-2}\tilde{\tau}_{s}
	\}_{s\geq 0},
\]
where the last equality follows from the scaling rule of a stable--$\frac{1}{2}$ process.

Next, suppose that branching random walks are built on the
Galton-Watson trees $\mathcal{T}_{i}$ by labelling the vertices,
as described earlier.  
Then clearly
\[
A_{\lfloor sN\rfloor } = \sum_{j\in \zz{Z}/\sqrt{N}}X^{\lfloor sN\rfloor} (\sqrt{N}j).
\]
By Theorem \ref{thm1.1}, $\{g_{s}^{N}\}_{s\ge 0} \Rightarrow \{g_{s}\}_{s\ge 0}$ in $\mathbb D([0, +\infty), \mathcal C_{0}(\mathbb R))$. Considering the space-truncated occupation density $g_{s}^{N}(x)\mathbf 1_{[-B, B]}(x)$ (i.e., truncated in space) for sufficiently large $B>0$ and following the same strategy as when proving the uniqueness of the limiting process $\{g_{s}\}_{s\ge 0}$, one would obtain 
\[
 \left\{\frac{1}{\sqrt{N}}\sum_{x\in \zz{Z}/\sqrt{N}}g_{s}^{N}(x)\right\}_{s\geq 0} \Longrightarrow \left\{ \int_{\zz R} g_{s}(x)dx\right\}_{s\ge 0} := \{\theta_s\}_{s\ge 0},
\]
where the left side is indeed 
\[
\frac{1}{\sqrt{N}}\sum_{x\in \zz{Z}/\sqrt{N}}g_{s}^{N}(x) = \frac{\sum_{x\in \zz{Z}/\sqrt{N}}X^{\lfloor sN\rfloor}(\sqrt{N}x)}{N^{2}} = \frac{A_{\lfloor sN\rfloor}}{N^{2}}. 
\]

Consequently, the processes $\{\theta_{s}\}_{s\ge 0}$ and
$\{\sigma_{\nu}^{-2}\tilde{\tau}_{s} \}_{s\geq 0}$ have the same law, and so
$\{\theta_{s}\}_{s\ge 0}$ is a stable-1/2 subordinator.

For (iv), we have already observed that $g_{s}$ has stationary, independent
increments and increasing sample paths relative to the natural partial
order $\leq$ on $\mathcal C_{0} (\zz{R})$. 
To show that $\{g_{s}\}_{s\ge 0}$ has pure jumps, we make use of the fact that the 
total area process $\{\theta_{s}\}_{s\ge 0}$ is a stable--$\frac{1}{2}$ subordinator 
and thus has pure jumps. Let $\mathcal J$ be the set of jump times of the process $\{\theta_{s}\}_{s\geq 0}$, that is, the set of all $t\geq 0$ for which $\theta
(t)-\theta (t-)>0$. Define
\[
	\tilde{g}_{s}=\sum_{t\in\mathcal  J\cap [0,s]} (g_{t}-g_{t-}),
\]
a process that collects the changes in $\{g_{s}\}_{s\ge 0}$ at those times when the limiting total area process $\{\theta_{s}\}_{s\ge 0}$ makes jumps.
Clearly, the process $\tilde{g}_{s}$ is an increasing process in
$\mathcal C_{0} (\zz{R})$, and since $\tilde g_{s}$ only gathers the 
jumps of $g_{s}$, we have
\[
	\tilde{g}_{s}\leq g_{s} \quad \text{for every} \;\; s\geq 0.
\]
But since the area process $\theta_{s}$ is pure jump, 
$g_{s}$ and $\tilde{g}_{s}$ bound the same total area for every $s$,
that is,
\[
	\int_{\zz{R}} g_{s} (x)\,dx = \theta_{s}= \int_{\zz{R}} \tilde{g}_{s} (x)\,dx.
\]
By continuity of both $g_{s}$ and $\tilde g_{s}$, we have $g_{s}=\tilde{g}_{s}$ for every $s$, and thus the process
$g_{s}$ is a pure jump process in $\mathcal C_{0} (\zz{R})$.

\end{proof}

\begin{proof}
[Proof of Theorem~\ref{thm:levy-measure}] We have already proved in
Theorem~\ref{thm:property-gs} that the process $g_{s}$ consists of pure
jumps. It remains to show that the point
process of jumps is a Poisson point process with intensity given 
by \eqref{eqn:gs-intensity} and then the representation 
\eqref{eqn:pointprocess} would follow automatically.

Consider the point process of jumps of $g_{s}$ for $s\leq 1$
(the case $s\leq s_{*}$, for arbitrary $s_{*}>0$, can be handled in
analogous fashion). Let $J_{1}, J_{2},\dots$ be the jumps ordered by
size from largest to smallest, as in Corollary~\ref{cor:order-stats}. Since by
Theorem~\ref{thm:property-gs},  the limiting process  $\{g_{s}\}_{s\ge 0}$ is a pure
jump subordinator, we have 
\begin{align}
\label{eqn:repg1}
	g_{1}=\sum_{i=1}^{\infty}J_{i}.
\end{align}
Theorem~\ref{thm:property-gs} also implies that the jump sizes
$|J_{i}|=\int J_{i} (x)\,dx$ are distributed (jointly) as the ordered excursion lengths
of a standard Brownian motion run up to the first time $t$ that
$L^{0}_{t}=1$, rescaled by $\sigma_{\nu}^{-2}$. By Corollary~\ref{cor:order-stats}, for any $m\geq 1$,
as $k \rightarrow \infty$,
\begin{align}
\label{eqn:jump-cov}
	(J^{N}_{1},J^{N}_{2},\dots ,J^{N}_{m})\Longrightarrow 
	(J_{1},J_{2},\dots ,J_{m}),
\end{align}
where $J^{N}_{1},J^{N}_{2},\dots$ are the ordered jumps in the (rescaled)
occupation density processes $g^{N}_{s}$ for $s\leq 1$ for the
branching random walk obtained by amalgamating the first $N$
trees.
Consequently, the joint distribution of the random variables
\[
	|J^{N}_{i}|=|\mathcal{T}^{N}_{(i)}|/N^{2}
\]
(where $\mathcal{T}^{N}_{(i)}$ is the $i$-th largest tree among the first
$N$ trees) converges to the joint distribution of the sizes $|J_{i}|$.
In particular, the largest, second largest, etc., trees among the
first $N$ trees have sizes of order $N^{2}$ --- and so as $N
\rightarrow \infty$, these will be large. 

To identify the limiting distribution of the rescaled jumps, we now
make use of Theorem 1.1 in \cite{condGW-profile}, which states
 that the occupation density of a conditioned branching random
walk scaled by the size of the tree converges to that of the 
ISE density $f_{\textup{ISE}}$, as the size of the tree becomes large.
This implies, for each $i=1,2,\dots $, as $N \rightarrow \infty$,
\begin{align}
\label{eqn:jump-char}
	\frac{J^{N}_{i} (|J_{i}^{N}|^{1/4}\ \cdot\, )}{|J_{i}^{N}|^{3/4}}
	\Longrightarrow \gamma f^{(i)}_{\textup{ISE}}(\gamma \ \cdot\,),\ \ 
	\text{where } \gamma = \sigma_{F}^{-1}\sigma_{\nu}^{1/2},
\end{align}
and the limiting ISE densities, $ f^{(1)}_{\textup{ISE}}$, $ f^{(2)}_{\textup{ISE}}$, $\ldots$ are i.i.d copies of $f_{\textup{ISE}}$. By \eqref{eqn:jump-cov} and \eqref{eqn:jump-char}, we can describe the joint distribution of $J_{1},J_{2},\dots$ as follows: (a) let
$\varepsilon_{1}>\varepsilon_{2}>\varepsilon_{3}>\dots $ be the
ordered excursion lengths of a standard Brownian motion run until the
first time $t$ such that $L^{0}_{t}=1$;  (b) let
$f_{1},f_{2},f_{3},\dots$ be i.i.d.
copies of the ISE density $f_{\textup{ISE}}$ which are independent of the $\varepsilon_{i}$'s; and (c) set 
\begin{align*}
	J_{i} (\cdot) &\mathdef (\sigma_{\nu}^{-2}\varepsilon_{i})^{3/4}\gamma f_{i} (\gamma (\sigma_{\nu}^{-2}\varepsilon_{i})^{-1/4}\ \cdot\,)\\
	&= \frac{1}{\sigma_{\nu}\sigma_{F}}\, \varepsilon_{i}^{3/4} f_{i} (\sigma_{\nu}\sigma_{F}^{-1}\varepsilon_{i}^{-1/4}\ \cdot\,).
\end{align*}
Since the ordered excursion lengths
$\varepsilon_{1}>\varepsilon_{2}>\varepsilon_{3}>\dots $  have the
distribution of the ordered points in a Poisson point process on
$\zz R_{+}$ with intensity measure
$
	\frac{dy}{\sqrt{2\pi y^{3}}},
$
the representation \eqref{eqn:pointprocess} follows from \eqref{eqn:repg1}.

\end{proof}

\noindent{\bf Acknowledgment.} The authors are grateful to the anonymous referee for valuable comments. }






\end{document}